\documentclass[a4paper,12pt,reqno]{amsart}

\usepackage{amsmath,amssymb,pb-diagram,enumerate,euscript,endnotes,verbatim,graphicx,amscd,latexsym,amsfonts,mathrsfs}

\theoremstyle{plain}
\newtheorem{theorem}{Theorem} [section]    

\newtheorem{corollary}[theorem]{Corollary}

\theoremstyle{definition}
\newtheorem{definition}[theorem]{Definition}
\newtheorem{examples}[theorem]{Examples}

\theoremstyle{definition}
\newtheorem{remark}[theorem]{Remark}
\newtheorem{remarks}[theorem]{Remarks}




\newcommand{\R}{\mathbf{R}}
\newcommand{\C}{\mathbf{C}}

\def\lin{{\rm lin\,}}
\def\pr{{\rm pr}}
\def\ran{{\rm ran\,}}

\newcommand{\la}{\langle}
\newcommand{\ra}{\rangle}
\newcommand{\lra}{\longrightarrow}
\newcommand{\restrict}[2]{{#1}\!\left|_{#2}\right.}
\DeclareMathOperator{\esssup}{ess \, sup}
\DeclareMathOperator{\spec}{Sp}




\begin{document}

\newenvironment{enumeratei}{\begin{enumerate}[\upshape (i)]}{\end{enumerate}}
\newenvironment{enumeratea}{\begin{enumerate}[\upshape (a)]}{\end{enumerate}}

\title[]{Involutions and Trivolutions in Algebras Related to Second Duals of Group Algebras}
\author{M. Filali}
\address{Department of Mathematical Sciences, University of Oulu, Oulu 90014, Finland}

\email{mahmoud.filali@oulu.fi}

\author{M. Sangani Monfared}
\address{Department of Mathematics and  Statistics, University of Windsor, Windsor, ON, N9B~3P4, Canada.}

\email{monfared@uwindsor.ca}

\author{Ajit Iqbal Singh}
\address{Theoretical Statistics and Mathematics Unit, Indian Statistical Institute, 7, S.J.S. Sansanwal Marg, New Delhi-110\  016, India}

\email{aisingh@isid.ac.in, aisingh@sify.com}


\subjclass[2010]{46K05, 22D15, 43A20}

\keywords{}

\dedicatory{}

\maketitle
\begin{abstract}
We define a trivolution on a complex algebra $A$ as a non-zero conjugate-linear, anti-homomorphism $\tau$ on $A$, which is
a  generalized inverse of itself, that is,  $\tau^3=\tau$.
We give several characterizations of trivolutions and show with examples that they appear naturally on many Banach algebras,
particularly those arising from group algebras.
We give several results on the existence or non-existence of involutions on the dual of a topologically introverted space.
We  investigate conditions under which the dual of a topologically introverted space admits trivolutions.
\end{abstract}
\section{Introduction and preliminaries}

By a well-known result of Civin and Yood \cite[Theorem 6.2]{CiYo61},  if $A$ is a Banach algebra with an involution
$\rho \colon A \lra A$,  then the second (conjugate-linear) adjoint $\rho^{**}\colon A^{**}\lra A^{**}$  is an involution on $A^{**}$
(with respect to either of the Arens products)  if and only if $A$ is Arens regular; when this is the case, $\rho^{**}$ is called the canonical extension of $\rho$.   Grosser \cite[Theorem 1]{Grosser84}
has shown  that if $A$ has a bounded right [left] approximate identity,
then a necessary condition for the existence of an involution on $A^{**}$ with respect to the first [second] Arens product
is that $A^*\cdot A=A^*$ [$A\cdot A^*=A^*$].  The above results applied to the group algebra $L^1(G)$ imply that
 a necessary condition for $L^1(G)^{**}$ to have an involution (with respect to either of the Arens products) 
is that  $G$ is discrete (Grosser \cite[Theorem 2]{Grosser84}); moreover, 
the natural involution of $L^1(G)$ has the canonical extension  to $L^1(G)^{**}$ if and only if $L^1(G)$ is Arens regular,
and, by a well-known result of Young \cite{Young73}, $L^1(G)$ is Arens regular if and only if  $G$ is finite.

In general,  since  $A$ is not  norm dense in $A^{**}$, an involution on $A$  may have  extensions
to $A^{**}$ which are different from the canonical extension. A necessary and sufficient condition for the existence of such
extensions does not seem to be known. However, for the special case of the group algebra $L^1(G)$,
Farhadi and Ghahramani \cite[Theorem 3.2(a)]{FaGh07} have shown that
if a locally compact group $G$ has an infinite amenable subgroup, then $L^1(G)^{**}$ does not
have any involution extending the natural involution of $L^1(G)$. (See also the related paper by Neufang \cite{Neufang10},
answering a question raised in  \cite{FaGh07}.)
 It is unknown whether for a discrete
 group which does not contain an infinite
 amenable group, there is an involution on $L^1(G)^{**}$ extending Êthe natural involution of $L^1(G)$.Ê

In  Singh \cite{Singh11}, the third author introduced the concept of $\alpha$-amenability for a locally compact group $G$.
Given a cardinal $\alpha$,  a group $G$ is called $\alpha$-amenable if there exists a subset $\mathcal F \subset L^1(G)^{**}$
containing a mean $M$ (not necessarily left invariant)
such that $|\mathcal F|\le \alpha$ and the linear span of $\mathcal F$ is a left ideal of $L^1(G)^{**}$.
The group $G$ is called subamenable if $G$ is $\alpha$-amenable for some cardinal  $1\le \alpha < 2^{2^{\kappa (G)}}$,
where $\kappa (G)$ denotes the compact covering number of $G$, that is, the least cardinality of a compact covering of $G$.
(We remark that at the time of the proof-reading 
of the paper Singh \cite{Singh11}, the author did not know that the term subamenable had already been used in different contexts,
viz., subamenable semigroups by Lau and Takahashi in \cite{LaTa96,LaTa03}, and initially subamenable groups
by Gromov in \cite{Gromov99}. She thanks A.\  T.-M. Lau for interesting exchange of mathematical points in this regard.)
It follows that $1$-amenability of $G$ is equivalent to the amenability of $G$, and $\alpha$-amenability implies $\beta$-amenability for every $\beta \ge \alpha$. If $G$ is a non-compact locally compact group, then every non-trivial right ideal in $L^1(G)^{**}$ or  in $LUC(G)^*$ has  (vector space) dimension  at least $2^{2^{\kappa (G)}}$ (Filali--Pym \cite[Theorem 5]{FiPy03}, and, Filali--Salmi \cite[Theorem 6]{FiSa07}).
It follows from  this lower bound   that if $G$ is a subamenable, non-compact, locally compact group,
then $L^1(G)^{**}$ has no involution (Singh \cite[Theorem 2.2]{Singh11}).
Singh \cite[Theorem 2.9(i)]{Singh11} also showed that every discrete group $G$ is a subgroup of a subamenable discrete group
$G_\sigma$ with $|G_\sigma | \le 2^{|G|}$.  It is not known whether there exists any non-subamenable group, and in particular,
it remains an open question whether the free group on $2$ generators is  subamenable.

All the above results show that the existence of involutions on  second dual Banach algebras impose strong conditions
on $A$. So  it seems natural to consider involution-like operators on Banach algebras and their second duals.
In this paper, we relax the condition of bijectivity on an involution $\rho$ and of $\rho$ being its own inverse
to that of $\rho$ being a  generalized inverse of itself, namely, $\rho^3 = \rho,$ and call them trivolutions (Definition \ref{090112A})
following  the terminology of J. W. Degen \cite{Degen94}. It follows from the definition that every involution is a trivolution,
but as we shall show later there are many naturally arising trivolutions which are not involutions.

In section \ref{se:140312C}  we start  with a general study of trivolutions on  algebras and we
give several characterizations of trivolutions in Theorem \ref{th:070911B}.  We  use the decompositions 
given in Theorem \ref{th:070911B}(iii) to characterize trivolutive homomorphisms (Theorem \ref{th:120712B}).
In Theorem \ref{th:041111A} we show
that, unlike involutions, a trivolution can have various extensions to the unitized algebra $A^{\sharp}$, and we give a
complete characterization of all such extensions.  
We show that concepts such as hermitian, normal, and unitary elements, usually associated with  involutions,
can be naturally defined in the context of trivolutive algebras.  In section \ref{se:140312A}, we study involutions on the dual
of a topologically introverted space. In Theorem \ref{le:150212A} we extend the result of Civin and Yood (discussed above)
to the dual of a topologically introverted space, and  obtain the result of Farhadi and Ghahramani \cite[Theorem 3.2(a)]{FaGh07}
as a corollary.  In Theorem \ref{le:231111A}  we investigate the relationship between the existence of topologically
invariant elements and the existence of involutions on the dual of a topologically introverted space.
As a corollary we show  that under fairly general conditions, neither of the Banach algebras  
$PM_p(G)^*$ and $UC_p(G)^*$, $1<p<\infty$,  have involutions
(for the definition of these spaces see below as well as the discussion prior to Corollary \ref{co:241111A}).
In  section \ref{se:140312B} we give some sufficient conditions under which
a second dual Banach algebra $A^{**}$ admits  trivolutions (Theorem \ref{th:300112B}).  In Theorem \ref{th:150911A}
we show that for $G$ non-discrete, $L^1(G)^{**}$ does not admit
any trivolutions with range  $L_0^\infty (G)^*$.  However, the space $L_0^\infty (G)^*$ itself always admits
 trivolutions (Theorem \ref{th:150911D}).

We close this section with a few preliminary definitions and notation.  Given a Banach algebra $A$, the dual space $A^*$ can be viewed as a Banach $A$-bimodule with the canonical operations:
\[
\la \lambda\cdot a, b\ra =\la \lambda ,ab\ra ,\quad \la a\cdot \lambda , b\ra =\la \lambda ,ba\ra ,
\]
where $\lambda \in A^*$ and $a,b\in A$. Let $X$ be a norm closed $A$-submodule of $A^*$. Then given $\Psi \in X^*$, $\lambda \in X$, we may define
$\Psi \cdot \lambda \in A^*$ by $\la \Psi\cdot \lambda , a\ra =\la \Psi , \lambda \cdot a \ra $. If $\Psi \cdot \lambda \in X$
for all choices of $\Psi \in X^*$ and $\lambda \in X$, then $X$ is called a left topologically  introverted subspace of $A^*$.
The dual of a left topologically  introverted subspace $X$ can be turned
into a Banach algebra if, for all $\Phi, \Psi \in X^*$, we define $\Phi \Box \Psi \in X^*$ by
$\la \Phi \Box \Psi , \lambda \ra =\la \Phi , \Psi \cdot \lambda \ra$.  In particular, by taking $X=A^*$, we obtain
the first (or the left) Arens product on $A^{**}$, defined by Arens \cite{Arens51,Arens51b}.
The space $X^*$ can be identified with the quotient algebra  $A^{**}/X^\circ$,
where $X^\circ =\{\Phi \in A^{**}\colon \restrict{\Phi}{X} =0\}$.
If $X$ is faithful (that is, $a=0$ whenever $\lambda (a)=0$ for all $\lambda \in X$),
then the natural map of $A$ into $X^*$ is an embedding, and we will regard $A$ as a subalgebra of $(X^*,\Box)$.
The space $X^*$ has a canonical $A$-bimodule structure defined by $\la a\cdot \Phi ,\lambda \ra =\la \Phi , \lambda \cdot a\ra$,
$\la \Phi \cdot a ,\lambda \ra =\la \Phi , a\cdot \lambda \ra$ $(\Phi \in X^*, \lambda \in X , a\in A)$.
One can then verify that $a\cdot \Phi =a\Box \Phi, \ \Phi \cdot a=\Phi \Box a$ for each $a\in A$ and $\Phi \in X^*$.
For each $\Phi\in X^*$, the map  $\Psi\mapsto \Psi \Box \Phi$, $ X^*\lra X^*$, is  continuous in the
$w^*$-topology of $X^*$.

Right topologically introverted subspaces
of $A^*$ are defined similarly; for these spaces the second (or the right) Arens product on $X^*$ will be denoted by $\Phi \Diamond \Psi$.
A space which is both left and right topologically introverted is called topologically introverted. When $A$ is commutative there will be
no distinction between left and right topologically introverted spaces.

Let $A$ be a Banach algebra. The space of [weakly] almost periodic
functionals on $A$,  denoted by $[WAP(A)]\ AP(A) $, is defined as the set of all $\lambda \in A^*$ such that the linear map
$A\lra A^*$, $a\mapsto a\cdot \lambda$, is [weakly] compact. The spaces $A^*$, $WAP(A)$, and $AP(A)$ are examples of
topologically introverted spaces. The space of left uniformly continuous functionals on $A$ defined by
$LUC(A)=\overline{\lin}(A^*\cdot A)$ (the closure is in norm topology), is an example of a left topologically introverted space.
If $G$ is a locally compact group, then $LUC(L^1(G))$ coincides with $LUC(G)$, the space of left uniformly continuous functions
on $G$ (cf.\ Lau \cite{Lau87}).  For more information and additional examples
one may consult  \cite{Dales00,DaLa05,DFS04,Granirer87,Lau79,SpSt}.

If $f\colon X \lra X$ is a map on a set $X$, for simplicity and when no confusion arises, we write $f^n$ to denote the $n$-times composition
of $f$ with itself, that is, $f^n :=f \circ \cdots \circ f$ $(n$-times$)$. Also when no confusion arises,
we write $\Phi \Psi$ to denote the first Arens product  $\Phi \Box \Psi$.
\section{Trivolutions}\label{se:140312C}

\begin{definition}\label{090112A}
A \emph{trivolution} on a complex algebra $A$ is a non-zero, conjugate linear, anti-homomorphism $\tau \colon A\lra A$, such that $\tau^3  =\tau$.
When $A$ is a normed algebra,  we  shall assume that $\|\tau\|= 1 $.  The pair $(A, \tau )$  will be called a \emph{trivolutive algebra}.
\end{definition}

\begin{remarks}\label{re:250312D}
(i) It follows from the definition that every involution on a non-zero complex algebra is a trivolution. Conversely, a trivolution which is either
injective or surjective, is an involution.
If $(A,\tau )$ is a trivolutive normed algebra, then $\|\tau (x)\| \le \|x\|$ for
every $x\in A$; in particular, when $\tau$ is an involution, we have $\|\tau (x)\|=\|x\|$ for every $x$.

(ii) If $(A,\tau )$ is a trivolutive normed algebra, then $\tau (A)$ is closed subalgebra of $A$:  if $(x_n)$ is a sequence
such that $\tau (x_n)\to x$ for some $x\in A$, then $\tau(x_n)=\tau^3(x_n) \to \tau^2(x)$, and hence $x=\tau ^2(x)\in \tau (A)$.
\end{remarks}

The next result gives  several characterizations of trivolutions. We remark that by the equivalence of (i) and (iii)
in the following theorem, it follows that every trivolutive [Banach] algebra is a [strongly] splitting extension of
an involutive [Banach] algebra. 
\begin{theorem}\label{th:070911B}
Let $A$ be a complex algebra, $\tau \colon A \lra A$  a non-zero map, and  $B:=\tau (A)$.  Let $\mathscr I_B$ denote the set of all involutions on $B$.
Then the following statements are equivalent.
\begin{enumeratei}
\item $\tau$ is a trivolution.
\item $\tau$ is a conjugate-linear, anti-homomorphism, and $\restrict{\tau}{B}\in \mathscr I_B$.
\item $B$ is a subalgebra of $A$ and there exist a projection  $p \colon A \lra B$ with $p$ an algebra homomorphism,
a two sided ideal $I$ of $A$, and an involution $\rho$ on $B$   such that 
\begin{equation}\label{eq:160712A}
A=I\oplus B, \quad \tau =\rho \circ p.  
\end{equation}
\item $B$ is a subalgebra of $A$,   $\mathscr I_B$ is non-empty, and for each  $\rho_2\in \mathscr I_B$,
there is a surjective homomorphism $\rho_1\colon A \lra B$  that satisfies $\tau =\rho_2\circ \rho_1$ and $\rho_1\circ \rho_2\in \mathscr I_B$.
\end{enumeratei}
The statements {\rm (i)--(iv)} remain equivalent if $A$ is a normed algebra  and  the maps
in {\rm (i)--(iv)} are assumed to be contractive. 
\end{theorem}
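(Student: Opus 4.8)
The plan is to prove the cycle of implications (i) $\Rightarrow$ (ii) $\Rightarrow$ (iii) $\Rightarrow$ (iv) $\Rightarrow$ (i), and then observe at each step that the constructions involved are isometric/contractive when $A$ carries a norm and $\tau$ is contractive, which yields the final sentence essentially for free. First I would establish (i) $\Rightarrow$ (ii). Assuming $\tau^3 = \tau$, the map $\tau$ is by hypothesis a conjugate-linear anti-homomorphism, so the only thing to check is that $\restrict{\tau}{B}$ is an involution on $B = \tau(A)$. Since $B$ is stable under $\tau$ (if $b = \tau(a) \in B$ then $\tau(b) = \tau^2(a)$, and $\tau(\tau(b)) = \tau^3(a) = \tau(a) = b \in B$, so indeed $\tau(B) \subseteq B$ and moreover $\restrict{\tau}{B}$ squares to the identity on $B$), $\restrict{\tau}{B}$ is a bijective conjugate-linear anti-homomorphism of $B$ with $(\restrict{\tau}{B})^2 = \mathrm{id}_B$, i.e.\ an involution. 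The reverse implication (ii) $\Rightarrow$ (i) is immediate: if $\restrict{\tau}{B}$ is an involution on $B$, then for any $a \in A$ we have $\tau(a) \in B$, so $\tau^3(a) = \tau\big((\restrict{\tau}{B})^2(\tau(a))\big)$—one must be slightly careful here and instead write $\tau^2(\tau(a)) = \tau(a)$ directly from $(\restrict{\tau}{B})^2 = \mathrm{id}_B$ applied to $\tau(a) \in B$—hence $\tau^3 = \tau$.

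Next, (ii) $\Rightarrow$ (iii). Set $\rho := \restrict{\tau}{B} \in \mathscr{I}_B$ and $p := \tau^2 \colon A \to A$. One checks $p(A) \subseteq B$ and $\restrict{p}{B} = \mathrm{id}_B$, so $p$ is a projection of $A$ onto $B$; that $p = \tau^2$ is an algebra homomorphism follows because $\tau$ is an anti-homomorphism (the composition of two anti-homomorphisms is a homomorphism) and $\tau^2$ is visibly linear since $\tau$ is conjugate-linear. Put $I := \ker p = \ker \tau^2$. Since $\tau$ is an anti-homomorphism and $p$ is a homomorphism with $p|_B = \mathrm{id}$, a short computation shows $I$ is a two-sided ideal and $A = I \oplus B$ as vector spaces (the decomposition $a = (a - p(a)) + p(a)$); finally $\rho \circ p = \restrict{\tau}{B} \circ \tau^2 = \tau^3 = \tau$, giving \eqref{eq:160712A}. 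For (iii) $\Rightarrow$ (iv): given the data $I, p, \rho$ from (iii) and an arbitrary $\rho_2 \in \mathscr{I}_B$, define $\rho_1 := \rho_2^{-1} \circ \rho \circ p = \rho_2 \circ \rho \circ p$ (using $\rho_2^2 = \mathrm{id}_B$). Then $\rho_1$ is a surjective homomorphism $A \to B$ (composition of homomorphisms, with $\rho$ an anti-homomorphism and $\rho_2$ an anti-homomorphism composing to a homomorphism on $B$, and $p$ a homomorphism), $\rho_2 \circ \rho_1 = \rho_2^2 \circ \rho \circ p = \rho \circ p = \tau$, and $\rho_1 \circ \rho_2 = \rho_2 \circ \rho \circ p \circ \rho_2 = \rho_2 \circ \rho \circ \rho_2$ (since $\rho_2(B) = B$ and $p|_B = \mathrm{id}$), which is a conjugate-linear anti-homomorphism of $B$ squaring to the identity, hence lies in $\mathscr{I}_B$.

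Finally, (iv) $\Rightarrow$ (i). Pick any $\rho_2 \in \mathscr{I}_B$ (nonempty by hypothesis) and the associated $\rho_1$ with $\tau = \rho_2 \circ \rho_1$. Then $\tau$ is conjugate-linear (as $\rho_1$ is linear and $\rho_2$ conjugate-linear) and an anti-homomorphism (homomorphism followed by anti-homomorphism). To get $\tau^3 = \tau$, compute $\tau^2 = \rho_2 \rho_1 \rho_2 \rho_1 = (\rho_1 \rho_2) $ conjugated appropriately; more cleanly, $\tau^2 = \rho_2 \rho_1 \rho_2 \rho_1$ and since $\rho_1 \rho_2 \in \mathscr{I}_B$ acts as an involution on $B \supseteq \rho_1(A)$, one gets $\rho_1 \tau^2 = \rho_1 \rho_2 \rho_1 \rho_2 \rho_1 = (\rho_1\rho_2)^2 \rho_1 = \rho_1$ on $A$, whence $\tau^3 = \rho_2 \rho_1 \tau^2 = \rho_2 \rho_1 = \tau$. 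The normed case: in (i) $\Rightarrow$ (ii) nothing changes; in (ii) $\Rightarrow$ (iii), $p = \tau^2$ is contractive when $\tau$ is, and $\restrict{\tau}{B}$ is automatically isometric on $B$ (a contractive involution is isometric, cf.\ Remarks \ref{re:250312D}(i)); in (iii) $\Rightarrow$ (iv) and (iv) $\Rightarrow$ (i), $\rho_1 = \rho_2 \circ \rho \circ p$ and $\tau = \rho_2 \circ \rho_1$ are contractive as composites of contractions, using again that involutions on the $C^*$-free normed algebra $B$ appearing here are isometric. I expect the main obstacle to be purely organizational rather than deep: one must be careful throughout that "anti-homomorphism composed with anti-homomorphism is a homomorphism" is applied with the right parity, that $B$ is genuinely a subalgebra (not merely a subspace) before speaking of involutions on it—this is where Remarks \ref{re:250312D}(ii) and the identity $\tau(B) = B$ do the work—and that the ideal $I = \ker \tau^2$ is two-sided, which requires using both that $p$ is a homomorphism and that it fixes $B$ pointwise.
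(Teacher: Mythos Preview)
Your proposal is correct and follows essentially the same route as the paper's proof: the same cycle (i)$\Rightarrow$(ii)$\Rightarrow$(iii)$\Rightarrow$(iv)$\Rightarrow$(i), with the identical key constructions $p=\tau^2$, $\rho=\restrict{\tau}{B}$, and $\rho_1=\rho_2\circ\tau\ (=\rho_2\circ\rho\circ p)$, and the same verification that $(\rho_1\circ\rho_2)^2=I_B$ via $\tau^2=p$. Your treatment is slightly more explicit in places (the parity bookkeeping, the note that $\ker\tau^2=\ker\tau$, and the normed-case contractivity checks), but there is no substantive difference in strategy.
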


\begin{proof}
(i)$\,\Longrightarrow\,$(ii): Since $\tau$ is a conjugate-linear, anti-homomorphism,  it follows that $B$ is a subalgebra of $A$; moreover,
the identity  $\tau^3=\tau$ implies that $(\restrict{\tau}{B})^2$ is the identity map on $B$, and therefore $\restrict{\tau}{B}\in \mathscr I_B$.

(ii)$\,\Longrightarrow\,$(iii):  $B$ is a subalgebra of $A$ since $\tau$ is a conjugate-linear anti-homomorphism.
Let $\rho :=\restrict{\tau}{B}$ and $p :=\tau \circ \tau \colon A \lra B$.
We leave it for the reader to verify the easy facts that $\tau =\rho \circ p$,
and  $p$ and $\rho$ satisfy the requirements in (iii). If we let $I=\ker \tau =\ker p$, then $I$ is a
two-sided ideal of $A$, and since $p$ is a projection we have $A=I\oplus B$. Moreover, since $I=\ker \tau$ and
$\rho =\restrict{\tau}{B}$, it follows that for each $x=y+z\in I\oplus B$, we have $\tau (x)=\tau (y)+\tau (z)=\rho (z)$.

(iii)$\,\Longrightarrow\,$(iv):  The first two statements of (iv) are immediate consequences of (iii).
Let $\rho_2\in \mathscr I_B$ be given and define $\rho_1: =\rho_2\circ \tau .$ Then $\rho_1$ is a
surjective homomorphism from $A$ to $B$ and furthermore, $\rho_2\circ \rho_1=\rho_2\circ \rho_2\circ \tau =I_B\circ \tau=\tau $
(where $I_B$ denotes the identity map on $B$).
Since  $\rho_1\circ \rho_2$  is a conjugate-linear, anti-homomorphism on $B$, it remains to show that  $(\rho_1\circ \rho_2)^2=I_B$.
This follows from the fact that $\tau^2=p$, since then  
\[
(\rho_1\circ \rho_2)^2=\rho_2\circ \tau \circ \rho_2\circ \rho_2\circ \tau\circ \rho_2
=\rho_2\circ \tau^2 \circ \rho_2
=\rho_2\circ p\circ \rho_2=I_B.
\]

(iv)$\,\Longrightarrow\,$(i): Since $\tau =\rho_2\circ \rho_1$, with $\rho_1,\rho_2$ as in (iv), it follows that $\tau$ is a surjective,
conjugate-linear anti-homomorphism. Now using the assumption that $\rho_1\circ\rho_2$ is an involution on $B$, we have
\[
\tau^3=\rho_2\circ (\rho_1\circ\rho_2)\circ (\rho_1\circ\rho_2)\circ \rho_1=\rho_2\circ \rho_1=\tau ;
\]
hence $\tau$ is a trivolution.
\end{proof}

\begin{corollary}\label{co:221012A}
 Suppose that $I=\ker \tau \ne \{0\}$.  Let $B=\tau (A)$ and $A=I\oplus B$ as in \eqref{eq:160712A}.  Then the following hold.
\begin{enumeratei}
\item $\tau =\begin{pmatrix}
0 & 0\\
0 & \rho
\end{pmatrix},$ where $\rho =\restrict{\tau}{B}.$
\item If $I$ has an involution $J$, then there are an algebra $C$, a homomorphism $\lambda \colon A \lra C$,
an involution $\sigma$ on $C$, and a homomorphism $\mu \colon C \lra A$ such that the  diagram 
\[
\begin{CD}
A @> \tau >>A\\
@V{\lambda} VV   @AA{\mu}A\\
C @>>\sigma > C  
\end{CD} 
\]
commutes, namely, $\tau =\mu \circ \sigma \circ \lambda.$
\end{enumeratei}
\end{corollary}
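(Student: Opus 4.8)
The plan is to build the intermediate involutive algebra $C$ by "stacking" the involution $J$ on $I$ with the involution $\rho$ on $B$. Concretely, I would set $C := I \oplus B$ as a vector space (so $C$ and $A$ agree as vector spaces), but equip $C$ with a new multiplication in which the $I$-component and the $B$-component are independent, i.e. $(y_1 + z_1)(y_2 + z_2) := y_1 y_2 + z_1 z_2$ for $y_i \in I$, $z_i \in B$ — this makes $C$ the algebra direct product $I \times B$. On $C$ define $\sigma := J \oplus \rho$, that is $\sigma(y + z) = J(y) + \rho(z)$; since $J$ and $\rho$ are each conjugate-linear anti-homomorphisms that square to the identity on their respective factors, $\sigma$ is an involution on $C$. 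The maps are then the obvious ones: $\lambda \colon A \to C$ sends $y + z \mapsto z$ (projection onto $B$, viewed inside $C$), and $\mu \colon C \to A$ sends $y + z \mapsto z$ as well. Both are algebra homomorphisms ($\lambda$ because the product on $A$, when projected to $B$ via the homomorphism $p$ of Theorem~\ref{th:070911B}(iii), is exactly the product in $C$ restricted to $B$; $\mu$ because it is just the inclusion $B \hookrightarrow A$ followed by nothing, and $B$ is a subalgebra).

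The verification that the diagram commutes is then a direct computation: for $x = y + z \in I \oplus B$ we have $\lambda(x) = z \in C$, then $\sigma(z) = \rho(z) \in C$, then $\mu(\rho(z)) = \rho(z) \in B \subset A$, which equals $\tau(x)$ by Corollary~\ref{co:221012A}(i) (or directly by \eqref{eq:160712A}). I would also record explicitly that $\sigma$ restricted to each factor recovers $J$ and $\rho$, which is where the hypothesis that $I$ carries an involution $J$ is used — it is needed precisely so that $\sigma$ is a genuine involution on all of $C$ rather than merely on a subalgebra.

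If the normed setting is in force, I would add one sentence noting that $C = I \times B$ carries, say, the max-norm $\|y + z\| := \max(\|y\|, \|z\|)$ (or the $\ell^1$-type norm), that $\sigma$, $\lambda$, $\mu$ are then contractive (using $\|J\| = \|\rho\| = 1$, the latter from Remark~\ref{re:250312D}(i), and the fact that $I$ and $B$ are the closed summands in the topological direct sum $A = I \oplus B$), and that $C$ is a Banach algebra when $A$ is. No essential obstacle arises here; the construction is a routine disjoint-union-of-involutive-algebras argument.

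The only point requiring a little care — the "main obstacle," such as it is — is checking that $\lambda$ is multiplicative, since the product on $A$ mixes the $I$ and $B$ components (the ideal $I$ need not be a direct-product factor of the \emph{algebra} $A$, only a vector-space complement of the subalgebra $B$). This is resolved by the observation that $p = \tau^2 \colon A \to B$ is an algebra homomorphism (Theorem~\ref{th:070911B}(iii)) and $\lambda$ is nothing but $p$ composed with the identification of $B$ with the $B$-factor of $C$; hence $\lambda$ inherits multiplicativity from $p$. With that in hand, all remaining checks are immediate from the definitions.
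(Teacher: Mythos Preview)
Your argument is correct, and it is in fact simpler than the paper's. The paper builds the same auxiliary algebra $\mathcal A := I \oplus B$ with coordinatewise multiplication that you do, but then takes $C := \mathcal A \times \mathcal A$ and defines $\sigma$ via the $2\times 2$ block matrix
\[
\sigma = \begin{pmatrix} \tau & J\circ \pr_1 \\ J\circ \pr_1 & \tau \end{pmatrix},
\]
with $\lambda(x) = (\tau^2(x),0)$ and $\mu(x,x') = \tau^2(x)$; the verification that $\sigma^2$ is the identity then requires a small matrix computation. Your choice $C = \mathcal A$, $\sigma = J \oplus \rho$, $\lambda = p$, $\mu = $ (projection to $B$ followed by inclusion) achieves the same factorization with less machinery, and your identification of the one nontrivial point --- that $\lambda$ is multiplicative because it is just $p = \tau^2$ viewed into the $B$-factor of $C$ --- is exactly right. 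In both constructions the involution $J$ plays the same cosmetic role: it is needed so that $\sigma$ is an honest involution on all of $C$, but it never influences the composite $\mu\circ\sigma\circ\lambda$, which factors through $B$ alone. The paper's doubled construction does not appear to buy any additional property (neither $\lambda$ injective nor $\mu$ surjective), so your streamlined version is a genuine simplification.
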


\begin{proof}
Statement (i) follows immediately from the fact that for each $x=y+z\in I\oplus B$, we have $\tau (x)=\tau (y)+\tau (z)=\rho (z)$.

To prove (ii), let $\pr_1 =I_A-\tau^2$ be the projection of $A$ onto $I$. We define the algebra  $\mathcal A$ as
the vector space $I\oplus B$ equipped  with coordinatewise multiplication, therefore, if $x=y+z\in \mathcal A$ and  $x'=y'+z' \in \mathcal A$ (with
$y,y'\in I$ and  $z,z'\in B$), then
 $xx'=yy'+zz'\in \mathcal A$. Now we let $C=\mathcal A \times \mathcal A$, equipped with
coordinatewise operations, and  we define
\[
\lambda (x)=(\tau^2(x),0), \qquad \mu (x,x')=\tau^2(x),
\]
and 
\begin{align*}
\sigma (x,x') &=
\begin{pmatrix}
\tau & J\circ \pr_1\\
J\circ \pr_1 & \tau
\end{pmatrix}
\begin{pmatrix}
x\\
x'
\end{pmatrix}\\
&=(J\circ \pr_1(x')+\tau (x), J\circ \pr_1(x)+\tau (x')).
\end{align*}
It is easy to check that both $\lambda$ and $\mu$ are homomorphisms, $\sigma$ is
a conjugate-linear anti-homomorphism, and   $\tau =\mu \circ \sigma \circ \lambda$.   Moreover, since
\begin{align*}
\sigma^2&=
\begin{pmatrix}
\tau^2+J\circ\pr_1\circ J\circ \pr_1 & \tau\circ J\circ \pr_1 +J\circ \pr_1\circ \tau\\
\tau\circ J\circ \pr_1 +J\circ \pr_1\circ \tau  & \tau^2+J\circ\pr_1\circ J\circ \pr_1
\end{pmatrix}\\
&=
\begin{pmatrix}
\tau^2+\pr_1 & 0\\
0  & \tau^2+\pr_1
\end{pmatrix}
=
\begin{pmatrix}
I_A & 0\\
0  & I_A
\end{pmatrix},
\end{align*}
it follows that $\sigma$ is an involution on $C$, completing the proof of (b).
\end{proof}

If $(A,\tau )$ is a trivolutive algebra, then we shall call the identities  in \eqref{eq:160712A} of Theorem \ref{th:070911B}, namely, 
$A=I\oplus B$, $\tau =\rho\circ p$, 
in which $p=\tau^2$, $B=p(A)=\tau (A)$, 
$I=\ker p=\ker \tau$, and $\rho=\restrict{\tau}{B}$, the \emph{canonical decompositions} of $(A, \tau )$.
As an interesting application of these ideas, we shall give a characterization of trivolutive
homomorphisms $\pi \colon (A_1, \tau_1)\lra (A_2, \tau_2)$. These are  algebra homomorphisms 
such that $\pi (\tau_1(x))=\tau_2(\pi (x))$, for all $x\in A_1$.

\begin{theorem}\label{th:120712B}
Let $(A_i,\tau_i)$, $i=1,2$, be two trivolutive [Banach] algebras and $\pi \colon A_1\lra A_2$ be a [continuous] trivolutive homomorphism.
Let $A_i=I_i\oplus B_i$ and $\tau_i=\rho_i\circ p_i$, be the canonical decompositions of $(A_i,\tau_i)$.
Then 
\[
\pi =
\begin{pmatrix}
\pi_{11} & 0\\
0 & \pi_{22}
\end{pmatrix}, 
\]
where $\pi_{11}\colon I_1\lra I_2$ and $\pi_{22}\colon B_1\lra B_2$ are both  [continuous] homomorphisms, and $\pi_{22}$
is involutive.  
\end{theorem}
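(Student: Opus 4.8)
The plan is to show that the intertwining relation $\pi\circ\tau_1=\tau_2\circ\pi$ forces $\pi$ to carry each summand of the canonical decomposition of $A_1$ into the corresponding summand of $A_2$; once this is established the block-diagonal form is automatic, and the remaining assertions follow by restriction.

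First I would verify that $\pi(I_1)\subseteq I_2$ and $\pi(B_1)\subseteq B_2$. Recall from the canonical decompositions that $I_i=\ker\tau_i$ and $B_i=\tau_i(A_i)$. If $y\in I_1$ then $\tau_2(\pi(y))=\pi(\tau_1(y))=\pi(0)=0$, so $\pi(y)\in\ker\tau_2=I_2$. If $z\in B_1$, write $z=\tau_1(a)$ for some $a\in A_1$; then $\pi(z)=\pi(\tau_1(a))=\tau_2(\pi(a))\in\tau_2(A_2)=B_2$. Since $A_2=I_2\oplus B_2$, setting $\pi_{11}:=\restrict{\pi}{I_1}$ and $\pi_{22}:=\restrict{\pi}{B_1}$ gives, for $x=y+z\in I_1\oplus B_1$, the decomposition $\pi(x)=\pi_{11}(y)+\pi_{22}(z)$ with $\pi_{11}(y)\in I_2$ and $\pi_{22}(z)\in B_2$; this is precisely the asserted matrix form $\pi=\begin{pmatrix}\pi_{11}&0\\0&\pi_{22}\end{pmatrix}$.

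Next, $I_i$ is a two-sided ideal (hence a subalgebra) of $A_i$ and $B_i$ is a subalgebra of $A_i$, so the restrictions $\pi_{11}$ and $\pi_{22}$ of the homomorphism $\pi$ are again homomorphisms, now with codomains $I_2$ and $B_2$ respectively by the previous step; if $\pi$ is continuous, so are its restrictions. Finally, to see that $\pi_{22}$ is involutive one uses $\rho_i=\restrict{\tau_i}{B_i}$: for $z\in B_1$,
\[
\pi_{22}(\rho_1(z))=\pi(\tau_1(z))=\tau_2(\pi(z))=\tau_2(\pi_{22}(z))=\rho_2(\pi_{22}(z)),
\]
where the last equality holds because $\pi_{22}(z)\in B_2$. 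Hence $\pi_{22}\circ\rho_1=\rho_2\circ\pi_{22}$, which is what it means for $\pi_{22}$ to be involutive.

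I do not anticipate a genuine obstacle here: the argument is essentially a matter of unwinding the definitions $I_i=\ker\tau_i$, $B_i=\tau_i(A_i)$, $\rho_i=\restrict{\tau_i}{B_i}$ and invoking the single hypothesis $\pi\circ\tau_1=\tau_2\circ\pi$. The only step demanding a little care is the last one: one must remember that the intertwining identity for $\pi$ is phrased in terms of the trivolutions $\tau_i$, and that restricting to $B_1$ — where $\tau_i$ coincides with the involution $\rho_i$ — is exactly what converts it into the statement that $\pi_{22}$ intertwines $\rho_1$ and $\rho_2$ rather than $\tau_1$ and $\tau_2$.
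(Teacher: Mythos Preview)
Your proof is correct and follows essentially the same approach as the paper: both use the intertwining relation $\pi\circ\tau_1=\tau_2\circ\pi$ to show the off-diagonal blocks vanish and that $\pi_{22}\circ\rho_1=\rho_2\circ\pi_{22}$. The only difference is presentational---the paper writes $\pi$ as a $2\times 2$ block matrix and equates entries of $\pi\circ\tau_1$ and $\tau_2\circ\pi$ (invoking bijectivity of $\rho_1,\rho_2$ to kill $\pi_{12},\pi_{21}$), whereas you verify $\pi(I_1)\subseteq I_2$ and $\pi(B_1)\subseteq B_2$ directly from $I_i=\ker\tau_i$ and $B_i=\tau_i(A_i)$; these are two ways of saying the same thing.
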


\begin{proof}
We confine our attention to the case when $I_1$ and $I_2$ are both
non-zero.
Following \eqref{eq:160712A} of Theorem \ref{th:070911B}, we can write 
$\tau_1 =
\begin{pmatrix}
0 & 0\\
0 & \rho_1
\end{pmatrix}$, and 
$\tau_2 =
\begin{pmatrix}
0 & 0\\
0 & \rho_2
\end{pmatrix}$.
Let
$
\pi =
\begin{pmatrix}
\pi_{11} & \pi_{12}\\
\pi_{21} & \pi_{22}
\end{pmatrix}
$
be the block matrix representation of $\pi$. The identity $\pi\circ \tau_1=\tau_2\circ \pi$ implies that
\[
\begin{pmatrix}
0 & \pi_{12}\circ \rho_1\\
0 & \pi_{22}\circ \rho_1
\end{pmatrix}= \begin{pmatrix}
0 & 0\\
\rho_2\circ\pi_{21} & \rho_2\circ\pi_{22}
\end{pmatrix}.
\]
Since both $\rho_1$ and $\rho_2$ are bijective, the identities $\pi_{12}\circ\rho_1=0$ and $\rho_2\circ \pi_{21}=0$,
imply that $\pi_{12}=0$ and $\pi_{21}=0$.  Now it is easy to check  that both $\pi_{11}$ and $\pi_{22}$ must be algebra homomorphisms. 
Moreover,  the identity $\pi_{22}\circ\rho_1=\rho_2\circ \pi_{22}$,
implies that $\pi_{22}\colon (B_1, \rho_1) \lra (B_2,\rho_2)$ is  involutive, as we wanted to show. 
\end{proof}

There are various automatic continuity results for homomorphisms and for homomorphisms intertwining with involutions in the
literature (cf.\ Dales \cite{Dales00} and Palmer \cite{Palmer94,Palmer01}). We can use these results to prove various automatic continuity results involving trivolutions. We illustrate just one such result.
By a famous result of B.\ E.\ Johnson if $A$ and $B$ are Banach algebras with $B$ semisimple, then any
surjective homomorphism $\varphi \colon A \lra B$ is continuous (Palmer \cite[Theorem 6.1.3]{Palmer94}). 
Moreover, it is well known that if $A$ is an 
involutive Banach algebra and $B$ is a C$^*$-algebra, then every involutive algebra homomorphism 
$\pi \colon A \lra B$ is continuous (Dales \cite[Corollary 3.2.4]{Dales00}). 
As an immediate consequence of these results and Theorem \ref{th:120712B}, we can state the following automatic continuity  result
for trivolutive homomorphisms. 
\begin{corollary}
Let $(A_i, \tau_i)$, $i=1,2$, be two trivolutive Banach algebras and $\pi \colon A_1\lra A_2$ be a trivolutive  homomorphism. 
If $\ker \tau_2$ is semisimple and is contained in  $ \pi (A_1)$, then  $\pi$ is continuous provided that for every $b\in \tau_2(A_2)$
we have $\|\tau_2(b)b\|=\|b\|^2$.
\end{corollary}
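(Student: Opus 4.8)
The plan is to reduce the statement to the two automatic-continuity theorems quoted just above, applied separately to the two diagonal blocks of $\pi$. First I would bring in the canonical decompositions $A_i = I_i\oplus B_i$, $\tau_i=\rho_i\circ p_i$, where $p_i=\tau_i^2$, $B_i=\tau_i(A_i)$, $I_i=\ker\tau_i$, $\rho_i=\restrict{\tau_i}{B_i}$. Since $\tau_i$ is contractive, hence continuous, the projections $p_i$ onto $B_i$ and the complementary projections onto $I_i$ are continuous, so each direct sum $A_i=I_i\oplus B_i$ is topological, $I_i$ is a closed (hence complete) two-sided ideal, and $B_i$ is a closed subalgebra on which $\rho_i$ is an isometric involution; in particular $(B_i,\rho_i)$ is an involutive Banach algebra and $I_i$ is a Banach algebra. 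By Theorem \ref{th:120712B}, $\pi$ has block-diagonal form $\pi=\begin{pmatrix}\pi_{11} & 0\\ 0 & \pi_{22}\end{pmatrix}$ with respect to these decompositions, where $\pi_{11}\colon I_1\lra I_2$ is an algebra homomorphism and $\pi_{22}\colon (B_1,\rho_1)\lra (B_2,\rho_2)$ is an involutive homomorphism. Because both direct sums are topological, it then suffices to show that $\pi_{11}$ and $\pi_{22}$ are each continuous.

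Next I would treat $\pi_{11}$. Since $\pi$ is block-diagonal and $I_2\cap B_2=\{0\}$, a short check gives $\pi(A_1)\cap I_2=\pi_{11}(I_1)$: the $B_2$-component of $\pi_{11}(y)+\pi_{22}(z)$ vanishes exactly when $\pi_{22}(z)=0$. Combined with the hypothesis $\ker\tau_2=I_2\subseteq \pi(A_1)$ this yields $\pi_{11}(I_1)=I_2$, so $\pi_{11}$ is a \emph{surjective} homomorphism from the Banach algebra $I_1$ onto the semisimple Banach algebra $I_2$. Johnson's theorem, in the form of Palmer \cite[Theorem 6.1.3]{Palmer94}, then gives that $\pi_{11}$ is continuous.

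Finally, for $\pi_{22}$ I would observe that $\tau_2(A_2)=B_2$ and $\tau_2(b)=\rho_2(b)$ for $b\in B_2$, so the hypothesis ``$\|\tau_2(b)b\|=\|b\|^2$ for every $b\in\tau_2(A_2)$'' says precisely that $\|\rho_2(b)\,b\|=\|b\|^2$ on $B_2$, i.e.\ the C$^*$-identity holds; since $(B_2,\rho_2)$ is already an involutive Banach algebra, it is therefore a C$^*$-algebra. Hence $\pi_{22}$ is an involutive algebra homomorphism from the involutive Banach algebra $(B_1,\rho_1)$ into a C$^*$-algebra, and so is continuous by Dales \cite[Corollary 3.2.4]{Dales00}. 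Assembling the two blocks, $\pi$ is continuous. The main obstacle I anticipate is essentially bookkeeping: verifying the identity $\pi(A_1)\cap I_2=\pi_{11}(I_1)$ that forces surjectivity of $\pi_{11}$, and recognizing that the norm hypothesis is exactly the C$^*$-identity on $B_2=\tau_2(A_2)$; once those are in place the result is just an assembly of the two cited automatic-continuity theorems, with the degenerate cases $I_1=\{0\}$ or $I_2=\{0\}$ being either trivial or a direct instance of Dales's theorem.
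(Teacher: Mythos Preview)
Your argument is correct and is exactly the route the paper has in mind: it declares the corollary an ``immediate consequence'' of Theorem~\ref{th:120712B} together with Johnson's theorem and Dales \cite[Corollary 3.2.4]{Dales00}, and you have simply written out those details---in particular the surjectivity of $\pi_{11}$ via $\pi(A_1)\cap I_2=\pi_{11}(I_1)$ and the identification of the norm hypothesis as the C$^*$-identity on $B_2$.
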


Next, we give a few  examples of trivolutions.
\begin{examples}\label{ex:120911A}
(a)  Let $(X, \mu )$ be a measure space and $K\subset X$ be a measurable subset of $X$ with $\mu (K)>0$. 
Let $L_K^\infty (X, \mu)$ be the subalgebra
of $L^\infty (X, \mu )$ consisting of all those functions which vanish locally almost everywhere on $X \backslash K$.  Let $\chi_K$ be the
characteristic function of $K$ and  $p \colon L^\infty (X, \mu ) \lra L^\infty_K (X, \mu )$ be the homomorphism $f\mapsto \chi_K f$.
If  $\rho$ is the usual complex conjugation on $L^\infty_K(X, \mu )$,
then  the map $\tau$ given by
  $\tau (f):=\rho  \circ p (f)=\chi_K\overline{f}$ is  a trivolution on $L^\infty (X, \mu )$.
  We next give an abstract alternative.

(b) Let $H$ be a Hilbert space, $X$ a non-zero closed subspace of $H$, and  $P$ be the orthogonal projection on $X$.
Let $M$ be a von Neumann algebra on $H$ such that $P\in M'$, where $M'$ is the commutant of $M$. Let $N$ be
the von Neumann subalgebra of $M$ defined by $N=\{PT\colon T\in M\}$. Let $p: M\lra N$, be defined by
$p (T)=PT$, and let $\rho $ be the natural adjoint map on $N$. In that case, $p\circ \rho =\rho $, and
by Theorem \ref{th:070911B}, $\tau (T):=\rho \circ p (T)=PT^*$ defines a trivolution on $M$.

(c) The quotient of a trivolutive [normed] algebra $(A,\tau )$ by a two-sided [closed] ideal $I$ such that $\tau (I)\subset I$
and $\tau (A) \not\subset I$,  is  a trivolutive [normed] algebra.
Finite products of trivolutive [normed] algebras,
the completion of a trivolutive normed algebra, and the opposite of a trivolutive [normed] algebra, are all trivolutive [normed]
algebras in  canonical ways.
\end{examples}

\begin{theorem}\label{ex:120911B}
Let $\tau$ be an anti-homomorphism  on an algebra $A$ and let $B=\tau (A)$.
\begin{enumeratei}
\item  If $e\in B$ is a right  identity of $A$, then $\tau (e)=e$ and $e$ is the identity of $B$.
\item The set $B$ can contain at most one right  identity of $A$.
\item Let $A$ be a subalgebra of an algebra $C$ of the form $eC$ with $e$ being a right identity of $C$. If $\tau$ is
a trivolution on $A$, and if $\ell_e$
denotes the left  multiplication map by $e$ on $C$, then $\tau_1 :=\tau \circ \ell_e$ is a trivolution on $C$
and $\tau_1(C)=\tau (A)=B$.
\end{enumeratei}
\end{theorem}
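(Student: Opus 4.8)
The plan is to verify the three statements in order, using the elementary structure theory from Theorem~\ref{th:070911B}. For (i), suppose $e\in B$ is a right identity of $A$, so $e=\tau(a)$ for some $a\in A$. For any $x\in A$ we have $xe=x$; applying the anti-homomorphism $\tau$ gives $\tau(e)\tau(x)=\tau(x)$, so $\tau(e)$ is a left identity \emph{for the subalgebra $B=\tau(A)$}. On the other hand, since $e=\tau(a)$, the identity $\tau^2$ need not be available (we only assume $\tau$ is an anti-homomorphism here, not a trivolution in (i)--(ii)), so I would argue directly: for every $b=\tau(x)\in B$, write $b=\tau(x)=\tau(xe)=\tau(e)\tau(x)=\tau(e)b$, which shows $\tau(e)$ is a left identity of $B$; in particular $\tau(e)e = e$. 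But $e$ is a right identity of $A$, hence of $B\subset A$, so $\tau(e)=\tau(e)e=e$. Thus $\tau(e)=e$, and since $\tau(e)$ is a left identity of $B$ and $e$ is a right identity of $B$, they coincide and $e$ is a two-sided identity of $B$.

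For (ii), if $e_1,e_2\in B$ are both right identities of $A$, then by (i) each is the identity of $B$; but a monoid has at most one identity element, so $e_1=e_2$. (Equivalently: $e_1=e_1e_2=e_2$ using that $e_2$ is a right identity and $e_1\in B$ equals the two-sided identity of $B$, hence acts as a left identity.) This step is immediate once (i) is in hand.

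For (iii), the main content. Since $A=eC$ and $e$ is a right identity of $C$, the map $\ell_e\colon C\to C$, $\ell_e(c)=ec$, is a homomorphism with range $eC=A$: indeed $\ell_e(c)\ell_e(c')=ecec'$, and since $e$ is a right identity, $ce=c$ fails in general, so I must instead check $\ell_e(cc')=ecc'$ versus $(ec)(ec')=e(ce)c'$; here $ce$ need not equal $c$. The fix is to note $ec\in A=eC$ and $e$ is a right identity of $C$, so for $a\in A$ we have... — actually the correct observation is that $\ell_e$ restricted to $A=eC$ is the identity: if $a=ec'$ then $ea=eec'=ec'=a$ since $e$ is idempotent ($e^2=e$ because $e\in C=eC$... wait, $e$ right identity gives $ee=e$ directly). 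Hence $\ell_e|_A=\mathrm{id}_A$, and for general $c,c'\in C$, $\ell_e(cc')=ecc'$ while $\ell_e(c)\ell_e(c')=(ec)(ec')=(ec)c'=ecc'$ using that $ec\in A$ and $\ell_e|_A=\mathrm{id}$ applied as $e(ec)=ec$. So $\ell_e$ is an idempotent homomorphism onto $A$. Consequently $\tau_1=\tau\circ\ell_e$ is conjugate-linear and an anti-homomorphism $C\to A\subseteq C$, and $\tau_1(C)=\tau(\ell_e(C))=\tau(A)=B$. It remains to show $\tau_1^3=\tau_1$. Compute $\tau_1^2=\tau\circ\ell_e\circ\tau\circ\ell_e$; since $\tau(A)=B\subseteq A=eC$ and $\ell_e|_A=\mathrm{id}_A$, we get $\ell_e\circ\tau=\tau$ (as $\tau$ maps into $A$, whether $\tau$ is viewed on $A$ or precomposed — here one must be slightly careful that the inner $\tau$ is applied to elements of $A$, which holds since it follows $\ell_e$). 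Hence $\tau_1^2=\tau\circ\tau\circ\ell_e=\tau^2\circ\ell_e$ and $\tau_1^3=\tau\circ\ell_e\circ\tau^2\circ\ell_e=\tau\circ\tau^2\circ\ell_e=\tau^3\circ\ell_e=\tau\circ\ell_e=\tau_1$, using $\ell_e\circ\tau^2=\tau^2$ (same reasoning, as $\tau^2$ maps into $B\subseteq A$) and the hypothesis $\tau^3=\tau$. Finally $\tau_1\neq 0$ since $\tau_1(C)=B=\tau(A)\neq\{0\}$, and in the normed case $\|\tau_1\|\le\|\tau\|\,\|\ell_e\|$; after the usual renormalization one may take $\|\tau_1\|=1$, or note $\ell_e$ is a contraction when $\|e\|\le 1$, which can be arranged.

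The main obstacle I anticipate is bookkeeping around the one-sided identity: because $e$ is only a \emph{right} identity of $C$, the relation $\ell_e\circ\tau=\tau$ must be justified through the fact that $\tau$ already takes values in $A=eC$ and that $\ell_e$ fixes $A$ pointwise ($e a = e(ec')=ec'=a$), rather than by any cancellation of $e$ on the right. Once the two facts ``$\ell_e$ is an idempotent homomorphism onto $A$'' and ``$\ell_e$ fixes $\tau(A)$ and $\tau^2(A)$'' are cleanly established, the identity $\tau_1^3=\tau_1$ and the range computation are routine, and statements (i) and (ii) are short.
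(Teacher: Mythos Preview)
Your argument is correct and follows essentially the same route as the paper: for (i) you show $\tau(e)$ is a left identity of $B$ and combine this with $e$ being a right identity to get $\tau(e)=\tau(e)e=e$; (ii) is immediate from uniqueness of a two-sided identity; and for (iii) you use that $\ell_e$ fixes $A=eC$ pointwise (since $e^2=e$) to obtain $\ell_e\circ\tau=\tau$, whence $\tau_1^3=\tau^3\circ\ell_e=\tau\circ\ell_e=\tau_1$, exactly as in the paper. One small slip to clean up: in checking that $\ell_e$ is multiplicative, the step $(ec)(ec')=(ec)c'$ is justified by the \emph{right}-identity property $(ec)e=ec$, not by $e(ec)=ec$ as you wrote; with that correction the argument is complete.
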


\begin{proof}
(i) If $a\in A$, then $a=ae$ and hence $\tau (a)=\tau (e)\tau (a)$, which shows that $\tau (e)$ is a left identity
for $B$.  Since $e\in B$, $e=\tau (e)e=\tau (e)$, proving that $e=\tau (e)$ and $e$ is the identity for $B$.

(ii) This is an immediate consequence of (i).

(iii) Since $e$ is a right identity of $C$ and $B\subset A=eC$, it follows that $\restrict{\ell_e}{B}$ is the  identity
map on $B$: in fact, given $b\in B$,  we can write $b=ec$ for some $c\in C$, and hence $ \ell_e(b)=e(ec)=ec=b.$
It follows that $\ell_e\circ \tau =\tau$,  and hence
\[
\tau_1^3=(\tau\circ \ell_e)^3=\tau \circ (\ell_e \circ \tau )\circ (\ell_e\circ \tau )\circ \ell_e =\tau^3\circ \ell_e =\tau \circ \ell_e =\tau_1.
\]
In addition, since $e$ is a right identity of $C$, we have $e(c_1c_2)=(ec_1)(ec_2)$, for all $c_1,c_2\in C$. Hence $\ell_e$ is a
homomorphism on $C$ which implies that $\tau_1=\tau\circ \ell_e$ is a conjugate-linear, anti-homomorphism, completing the proof that
$\tau_1$ is a trivolution on $C$. The fact that $\tau_1 (C)=B$ is now immediate.
\end{proof}

\begin{remarks}
(i)  Similar results hold if a right identity is replaced by a left identity in Theorem \ref {ex:120911B};
we leave the formulation of the results and their  proofs for the readers.

(ii) Let $\tau$ be a trivolution on $A$ and let $B= \tau (A)$. If $A$ has the identity $e$, then $\tau (e)$
is the identity of $B$, which we may denote  by $e_B$. Clearly $e=e_B$ if and only if $e\in B$. This
however may not always be the case: let $A=\C^2$, $B=\C \times \{0\}$,
and $\tau (z_1,z_2)=(\overline z_1,0)$; then $e=(1,1)$ but $e_B=(1,0)$. 
\end{remarks}

Next we consider the problem of extending a trivolution to the unitized algebra $A^\sharp =\C \times A$.
Let $(A,\tau )$ be a trivolutive algebra and $\tau^\sharp \colon A^\sharp \lra A^\sharp$ be a trivolution extending $\tau$,
namely,  $\tau^\sharp (0,x)=(0,\tau (x)),\ \text{for all }x\in A$. If $\tau^\sharp (1,0)=(\lambda_0, x_0)$,
then, from conjugate linearity of $\tau^\sharp$ we obtain:
\begin{equation}\label{eq:250312E}
\tau^\sharp (\lambda , x)=\tau^\sharp (\lambda (1,0)+(0,x))=(\overline \lambda \lambda_0, \overline \lambda x_0+\tau (x)).
\end{equation}
 If $(\lambda_0, x_0)=(1,0)$, then $\tau^\sharp (\lambda , x)=(\overline \lambda , \tau (x))$.
We call this map the canonical extension of $\tau$ to $A^\sharp$. We note that by Theorem \ref{ex:120911B}(i),
the condition $\tau^\sharp (1,0)=(1,0)$ is equivalent to $(1,0)$ being in the range of $\tau^\sharp$, and
the latter condition can be shown to be equivalent to $\lambda_0\ne 0$ and $x_0\in \tau (A)$.

While every involution has only the canonical  extension to an involution on the unitized algebra,
the situation is different for trivolutions, as the following theorem shows.

\begin{theorem}\label{th:041111A}
Let $\tau$ be a trivolution on a complex algebra $A$.
The map $\tau^\sharp$ in \eqref{eq:250312E} is a trivolution extending $\tau$  if and only if either of the following
conditions hold:
\begin{enumeratei}
\item $\tau^\sharp (\lambda , x)=(\overline \lambda, \overline \lambda x_0+\tau (x))$, where $x_0\in A$ is such that
\begin{equation}\label{eq:041111B}
x_0^2=-x_0, \quad x_0\tau (A)=\tau (A)x_0=\{0\}, \quad \tau (x_0)=0.
\end{equation}
\item $\tau^\sharp (\lambda , x)=(0, \overline \lambda x_0+\tau (x))$, where $x_0\in \tau (A)$ is the identity of $\tau (A)$.
\end{enumeratei}
\end{theorem}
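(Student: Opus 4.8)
The plan is to use that the map $\tau^\sharp$ of \eqref{eq:250312E} is conjugate-linear by its very form, and is non-zero since it extends the non-zero map $\tau$; hence $\tau^\sharp$ is a trivolution if and only if it is an anti-homomorphism of $A^\sharp=\C\times A$ and satisfies $(\tau^\sharp)^3=\tau^\sharp$. I would simply write both requirements out explicitly in terms of the pair $(\lambda_0,x_0):=\tau^\sharp(1,0)$ and read off the constraints they impose.

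Expanding the anti-homomorphism identity $\tau^\sharp\bigl((\lambda,x)(\mu,y)\bigr)=\tau^\sharp(\mu,y)\,\tau^\sharp(\lambda,x)$ via the product of $A^\sharp$ and \eqref{eq:250312E}, and comparing the scalar coordinates, forces $\lambda_0^2=\lambda_0$, so $\lambda_0\in\{0,1\}$. Comparing the $A$-coordinates and cancelling the common term $\tau(y)\tau(x)$ leaves an identity of the form $\overline{\lambda}\,\overline{\mu}\,P+\overline{\lambda}\,Q+\overline{\mu}\,R=0$ valid for all $\lambda,\mu\in\C$ and $x,y\in A$; specializing $\lambda,\mu\in\{0,1\}$ separates it into the three relations
\[
x_0=2\lambda_0 x_0+x_0^2,\qquad (1-\lambda_0)\tau(y)=\tau(y)x_0,\qquad (1-\lambda_0)\tau(x)=x_0\tau(x)\qquad(x,y\in A).
\]
For $\lambda_0=1$ these say $x_0^2=-x_0$ and $x_0\tau(A)=\tau(A)x_0=\{0\}$; for $\lambda_0=0$ they say $x_0$ is idempotent and is a two-sided identity for $\tau(A)$.

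For the cube condition, using \eqref{eq:250312E} together with $\lambda_0^2=\lambda_0$ and $\tau^3=\tau$ one finds $(\tau^\sharp)^2(\lambda,x)=\bigl(\lambda\lambda_0,\ \lambda\lambda_0 x_0+\lambda\tau(x_0)+\tau^2(x)\bigr)$, and then $(\tau^\sharp)^3=\tau^\sharp$ reduces to $(\lambda_0-1)x_0+\lambda_0\tau(x_0)+\tau^2(x_0)=0$, i.e.\ $\tau(x_0)+\tau^2(x_0)=0$ when $\lambda_0=1$ and $\tau^2(x_0)=x_0$ when $\lambda_0=0$. Now I would treat the two cases. If $\lambda_0=1$, I would bring in the canonical decomposition $A=I\oplus B$, $B=\tau(A)$, $I=\ker\tau$, and write $x_0=y_0+z_0$ accordingly: since $z_0\in\tau(A)$, the relation $x_0\tau(A)=\{0\}$ gives $x_0z_0=0$, whose $B$-component is $z_0^2$ (as $I$ is an ideal), so $z_0^2=0$; the $B$-component of $x_0^2=-x_0$ is $z_0^2=-z_0$; hence $z_0=0$, $x_0\in\ker\tau$ and $\tau(x_0)=0$, so the cube relation holds automatically. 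This shows that for $\lambda_0=1$, $\tau^\sharp$ is a trivolution precisely when \eqref{eq:041111B} holds, and conversely \eqref{eq:041111B} makes all the displayed identities true. If $\lambda_0=0$, the cube relation $\tau^2(x_0)=x_0$ is exactly the statement $x_0\in\tau(A)$, which with the previous step means $x_0$ is the identity of $\tau(A)$; conversely, if $x_0\in\tau(A)$ is the identity of $\tau(A)$ then $x_0^2=x_0$, $x_0$ is a two-sided identity for $\tau(A)$, and $\tau^2(x_0)=x_0$, so $\tau^\sharp$ is a trivolution. Since $\lambda_0\in\{0,1\}$ exhausts the possibilities, the theorem follows.

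I expect the one non-routine point to be the step, in the case $\lambda_0=1$, that forces $\tau(x_0)=0$: the trick is to feed the multiplicative constraints $x_0^2=-x_0$ and $x_0\tau(A)=\tau(A)x_0=\{0\}$ into the canonical decomposition $A=I\oplus B$ and exploit that $I$ is an ideal and $B$ a subalgebra to annihilate the $B$-component of $x_0$. (Alternatively $\tau(x_0)=0$ drops out by combining $\tau(x_0)^2=\tau(x_0^2)=-\tau(x_0)$ with $0=\tau\bigl(x_0\tau(x_0)\bigr)=\tau^2(x_0)\tau(x_0)=-\tau(x_0)^2$, the last equality using the cube relation; but the decomposition argument is cleaner and connects with the canonical decompositions of Theorem \ref{th:070911B}.) Everything else is routine bookkeeping with \eqref{eq:250312E} and the multiplication of $A^\sharp$.
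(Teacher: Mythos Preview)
Your proposal is correct and follows essentially the same strategy as the paper: write out the anti-homomorphism and cube identities for $\tau^\sharp$ in terms of $(\lambda_0,x_0)$, deduce $\lambda_0\in\{0,1\}$, and analyze the two cases. The one genuine difference is your derivation of $\tau(x_0)=0$ in the case $\lambda_0=1$. The paper uses the cube relation $\tau(x_0)+\tau^2(x_0)=0$ together with $x_0\tau(A)=\{0\}$ and $x_0^2=-x_0$ to compute $0=\tau^2(x_0)\tau(x_0)=-\tau(x_0)^2=-\tau(x_0^2)=\tau(x_0)$ (precisely your ``alternatively'' argument). Your primary argument instead feeds the anti-homomorphism constraints into the canonical decomposition $A=\ker\tau\oplus\tau(A)$ and kills the $\tau(A)$-component of $x_0$ directly; this has the small conceptual advantage of showing that, once $\lambda_0=1$, the anti-homomorphism condition alone already forces $x_0\in\ker\tau$, so the cube relation is automatic. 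Both routes are short and valid.
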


\begin{proof}
The proof that both  (i) and (ii) define trivolutions on $A^\sharp$ extending $\tau$, is routine and is left for the reader.
We prove the necessity part of the theorem.  Using the idempotence of $(1,0)$ we get
\[
(\lambda_0, x_0)=\tau^\sharp (1,0)=\tau^\sharp (1,0)^2=(\lambda_0^2, 2\lambda_0x_0+x_0^2),
\]
which implies that either $\lambda_0=1$ and $x_0^2=-x_0$; or $\lambda_0=0$ and $x_0^2=x_0$.
We consider these two cases.

Case I: $\lambda_0= 1$ and $x_0^2=-x_0$.
In this case $\tau^\sharp (\lambda , x)=(\overline \lambda, \overline \lambda x_0+\tau (x))$.
Applying $\tau^\sharp$ to the identity $(1,0)(0,x)=(0,x)$, we obtain
\[
(0,\tau (x)+\tau (x)x_0)=(0, \tau (x)),
\]
which implies that $\tau (x)x_0=0$ for all $x\in A$. Similarly, starting from the identity $(0,x)(1,0)=(0,x)$ we can show that
$x_0\tau (x)=0$ for all $x\in A$.  Moreover
it follows from $(\tau^\sharp) ^3 (1,0)=\tau^\sharp (1,0)$, that
\[
(1, x_0+\tau (x_0)+\tau^2 (x_0))=(1,x_0),
\]
which is equivalent to $\tau (x_0)+\tau^2(x_0)=0$. Therefore
\[
0=x_0\tau (x_0)=\tau^2(x_0)\tau (x_0)=-\tau (x_0)^2=-\tau (x_0^2)=\tau (x_0).
\]
Thus $x_0$ satisfies all the conditions in \eqref{eq:041111B}.

Case II: $\lambda_0=0$. In this case $\tau^\sharp (\lambda , x)=(0, \overline \lambda x_0+\tau (x))$.
Applying $\tau^\sharp$ to the identities $(1,0)(0,x)=(0,x)$ and $(0,x)(1,0)=(0,x)$, we  obtain respectively,
$\tau (x)x_0=\tau (x)$, $x_0\tau (x)=\tau (x)$,  for all $x\in A$.
Moreover, from $(\tau^\sharp) ^3 (1,0)=\tau^\sharp (1,0)$, it follows that $x_0=\tau^2 (x_0)\in \tau (A)$. Thus $x_0$
is the identity of $\tau (A)$.
\end{proof}

\begin{corollary}\label{co:171011A}
Let $(A, \tau )$ be a trivolutive normed algebra and  $A^\sharp$ be the unitized algebra with the norm $\|(\lambda , x)\|=|\lambda |+\|x\|$.
A map $\tau^\sharp\colon A^\sharp \lra A^\sharp$,
is a trivolution extending $\tau$  if and only if either of the following
conditions hold:
\begin{enumeratei}
\item $\tau^\sharp (\lambda , x)=(\overline \lambda, \tau (x))$;
\item $\tau^\sharp (\lambda , x)=(0, \overline \lambda x_0+\tau (x))$, where $x_0\in \tau (A)$
is the identity of $\tau (A)$ with  $\|x_0\|=1$.
\end{enumeratei}
\end{corollary}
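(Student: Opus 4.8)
The plan is to deduce the corollary from Theorem \ref{th:041111A}. Every trivolution $\tau^\sharp$ on $A^\sharp$ extending $\tau$ is conjugate-linear, hence of the form \eqref{eq:250312E}, so by Theorem \ref{th:041111A} it must be of type (i) or (ii) there; the only extra information carried by the normed setting is the normalization $\|\tau^\sharp\|=1$. Since the restriction of $\tau^\sharp$ to the isometrically embedded copy $\{0\}\times A$ of $A$ equals $\tau$, and $\|\tau\|=1$ by the standing hypothesis on $(A,\tau)$, one automatically has $\|\tau^\sharp\|\ge 1$; thus the normalization amounts exactly to $\tau^\sharp$ being contractive, and it is this contractivity that will collapse the two cases of Theorem \ref{th:041111A} to the two alternatives in the corollary.

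For the forward implication I would test contractivity on the norm-one vector $(1,0)$. In case (i) of Theorem \ref{th:041111A} one has $\tau^\sharp(1,0)=(1,x_0)$ and $\|\tau^\sharp(1,0)\|=1+\|x_0\|$; contractivity forces $x_0=0$, so $\tau^\sharp(\lambda,x)=(\overline\lambda,\tau(x))$, which is alternative (i). In case (ii) of Theorem \ref{th:041111A}, $x_0\in\tau(A)$ is the identity of $\tau(A)$ and $\tau^\sharp(1,0)=(0,x_0)$, so contractivity gives $\|x_0\|\le 1$; on the other hand $x_0$ is a nonzero idempotent (the algebra $\tau(A)$ is nonzero, as $\tau\ne0$), so $\|x_0\|=\|x_0^2\|\le\|x_0\|^2$ yields $\|x_0\|\ge1$, hence $\|x_0\|=1$, which is alternative (ii).

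For the converse I would just verify that the two displayed maps are contractive, so that, being nonzero, they have norm exactly $1$ and are trivolutions extending $\tau$ by the sufficiency half of Theorem \ref{th:041111A}: in case (i), $\|\tau^\sharp(\lambda,x)\|=|\lambda|+\|\tau(x)\|\le|\lambda|+\|x\|$ by Remarks \ref{re:250312D}(i); in case (ii), $\|\tau^\sharp(\lambda,x)\|=\|\overline\lambda x_0+\tau(x)\|\le|\lambda|\,\|x_0\|+\|\tau(x)\|=|\lambda|+\|\tau(x)\|\le|\lambda|+\|x\|$, using $\|x_0\|=1$ and Remarks \ref{re:250312D}(i). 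There is essentially no obstacle; the one noteworthy point is the rigidity in case (i): algebraically, Theorem \ref{th:041111A} permits a whole family of non-canonical extensions indexed by elements $x_0$ with $x_0^2=-x_0$ that annihilate $\tau(A)$, but the $\ell^1$-type norm $\|(\lambda,x)\|=|\lambda|+\|x\|$ is incompatible with any nonzero such $x_0$, so only the canonical extension survives in that branch.
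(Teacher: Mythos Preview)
Your argument is correct and follows essentially the same route as the paper: reduce to the two cases of Theorem~\ref{th:041111A}, test the norm condition on $(1,0)$ to force $x_0=0$ in case~(i) and $\|x_0\|=1$ in case~(ii), invoking that a nonzero idempotent has norm at least $1$. You are in fact slightly more thorough than the paper, which leaves the converse (contractivity of the two displayed maps) implicit, whereas you spell it out.
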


\begin{proof}
If $\tau^\sharp$ is of the form given in Theorem \ref{th:041111A}(i), then $\tau^\sharp (\lambda , 0)=(\overline\lambda , \overline\lambda x_0)$
for all $\lambda \in \C$.  Since we must have $\|\tau^\sharp\|= 1$, we obtain  $|\lambda|+|\lambda |\|x_0\|\le |\lambda |$,
implying that $x_0=0$.

If however $\tau^\sharp$ is of the form given in Theorem \ref{th:041111A}(ii), then
$\tau^\sharp (1 , 0)=(0 ,  x_0)$. Hence the condition
$\|\tau^\sharp\|= 1$ implies that $\|x_0\|\le 1$. The inequality $\|x_0\|\ge 1$
is immediate since $x_0$ is a non-zero idempotent.
\end{proof}

\begin{remark}
Let $\tau$ be an involution on a complex algebra $A$. Then $\tau (A)=A$, and hence any extension of $\tau$ of the form given in Theorem \ref{th:041111A}(i),
is necessarily equal to $\tau^\sharp (\lambda , \tau (x))=(\overline\lambda , \tau (x))$, since $x_0\tau (A)=\{0\}$ implies
that $x_0^2=0$ and hence $x_0=0$ (as $x_0^2=-x_0$). It should be noted that if $A$ has no identity, then $\tau$ has no extension
of the form given in  Theorem \ref{th:041111A}(ii).
\end{remark}

We can define the concepts of normality, hermiticity, and positivity for elements of trivolutive algebras.

\begin{definition}
Let $(A, \tau )$ be a trivolutive algebra and let $x\in A$. Then $x$ is called
\begin{enumeratei}
\item  \emph{hermitian} if $\tau (x)=x$;
\item  \emph{normal} if $x\tau (x)=\tau (x)x$ and $x\tau^2 (x)=\tau ^2(x)x$;
\item  \emph{projection} if  $x$ is hermitian and  $x^2=x$;
\item \emph{unitary} if $A$ is unital with identity $e$ and $x\tau (x)=\tau (x)x=e$;
\item \emph{positive} if  $x$ is hermitian and $x=\tau (y)y$ for some $y\in A$.
\end{enumeratei}
\end{definition}

We denote the set of all  hermitian (respectively, unitary, positive) elements of $A$,  by $A_h$ (respectively, $A_u$,  $A^+$).
It follows that $A_h$ is a real vector subspace of $A$, and  $A_u$ is a group under multiplication (the unitary group of $A$).
It follows from the definition that if $x$ is hermitian, then  $x\in A^+$  if and only if $x=z\tau (z)$ for some $z\in A$.
It should be noted that  for trivolutive algebras in general, $A^+$ need not form  a positive cone.
The definition of normality is designed to have the $\tau$-invariant algebra generated by $x$ (and therefore, containing
both $\tau (x)$ and $\tau^2(x)$) commutative (since  $\tau (x)\tau^2 (x)=\tau (\tau (x)x)=\tau (x\tau (x))=\tau^2 (x)\tau (x)$).
If $x$ is unitary in $A$, then by letting $B=\tau (A)$ and $e_B=\tau (e)$, we see that $\tau^2(x)$ is the inverse of $\tau (x)$ in $B$,
and $x\in B$ implies that $e\in B$ (and  $e=e_B$). Thus, $e\in B$ if and only if $B$ contains at least one unitary element.

Let $(A,\tau )$ be a trivolutive algebra and  $\tau^*$ be the conjugate-linear  adjoint of $\tau$ 
defined by $\la \tau^* (f), a\ra =\overline{\la f, \tau (a)\ra}, \ (f\in A^*, a\in A)$.
If $f\colon A \lra \C$ is a linear functional on $A$, then $f^\tau :=\tau^* (f)$,  is also a linear
functional on $A$.  One can easily check that the map $f\lra f^\tau$, is  conjugate-linear and
in general  $f^{\tau\tau\tau}=f^\tau$.  If $(A, \tau )$ is normed, then $\|f^\tau \|\le \|f\|$.
We call $f$ hermitian if $f^\tau=f$. Clearly if $\chi$ is a (Gelfand) character on $A$, then $\chi^\tau$ is also a character on $A$.

We close this section by stating the following two results whose straightforward proofs are omitted
for brevity. 

\begin{theorem}\label{th:081011A}
Let $(A, \tau )$ be a unital trivolutive algebra and $B=\tau (A)$.    Let $x\in A$.
\begin{enumeratei}
\item  If  $x$ is invertible in $A$, then $\tau (x)$ is invertible in $B$ and $\tau (x)^{-1}=\tau (x^{-1})$.
\item If $\tau (x)$ is invertible in $A$, then $\tau (x)$ is invertible in $B$.
\item $\spec_B (\tau (x))\subset \overline{\spec_A (x)}$ (where the bar denotes the complex conjugate).
\end{enumeratei}
\end{theorem}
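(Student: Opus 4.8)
The plan is to derive all three parts from one observation: since $\tau$ is a conjugate-linear anti-homomorphism with $\tau^{3}=\tau$, it maps elements invertible in $A$ to elements invertible in $B=\tau(A)$, where $B$ is unital with identity $e_{B}:=\tau(e)$ ($e$ being the identity of $A$). First I would record the two facts I need about $e_{B}$: for $b=\tau(a)\in B$ one has $\tau(e)\,b=\tau(e)\tau(a)=\tau(ae)=\tau(a)=b$ and likewise $b\,\tau(e)=b$, so $\tau(e)$ is the identity of $B$; and since $\restrict{\tau}{B}$ is an involution on $B$ (Theorem~\ref{th:070911B}) it fixes that identity, i.e.\ $\tau(e_{B})=e_{B}$ (cf.\ also Theorem~\ref{ex:120911B}(i)). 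Throughout, invertibility and spectra in $B$ are taken with respect to $e_{B}$, which in general differs from $e$ (they agree precisely when $e\in B$, e.g.\ when $\tau$ is an involution).

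For (i) I would apply $\tau$ to the identities $xx^{-1}=x^{-1}x=e$; by anti-multiplicativity, $\tau(x^{-1})\tau(x)=\tau(x)\tau(x^{-1})=\tau(e)=e_{B}$, so $\tau(x^{-1})\in B$ is a two-sided inverse of $\tau(x)$ in $B$, which also gives the displayed formula. For (ii), given $w\in A$ with $\tau(x)w=w\tau(x)=e$, applying $\tau$ once gives $\tau(w)\tau^{2}(x)=\tau^{2}(x)\tau(w)=e_{B}$, and applying $\tau$ a second time, using $\tau^{3}=\tau$ and $\tau(e_{B})=e_{B}$, gives $\tau(x)\tau^{2}(w)=\tau^{2}(w)\tau(x)=e_{B}$; since $\tau^{2}(w)\in B$, this shows $\tau(x)$ is invertible in $B$. (Equivalently, apply part (i) to $\tau(x)$ in place of $x$ to see that $\tau^{2}(x)$ is invertible in $B$, then transport back along the involution $\restrict{\tau}{B}$, which preserves invertibility.)

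For (iii) I would observe that for every $\lambda\in\C$, conjugate-linearity of $\tau$ and $\tau(e)=e_{B}$ give $\tau(x)-\lambda e_{B}=\tau\bigl(x-\overline{\lambda}\,e\bigr)$; hence if $\lambda\notin\overline{\spec_{A}(x)}$, i.e.\ $\overline{\lambda}\notin\spec_{A}(x)$, then $x-\overline{\lambda}\,e$ is invertible in $A$, so by (i) $\tau(x)-\lambda e_{B}$ is invertible in $B$, i.e.\ $\lambda\notin\spec_{B}(\tau(x))$. This yields $\spec_{B}(\tau(x))\subset\overline{\spec_{A}(x)}$. The argument is essentially bookkeeping and I expect no genuine obstacle; the main point to watch is keeping the identities $e$ and $e_{B}$ separate, and the only mildly non-routine step is the double application of $\tau$ in (ii), where $\tau^{3}=\tau$ is precisely what returns $\tau^{2}(x)$ to $\tau(x)$.
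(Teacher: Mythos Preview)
Your proof is correct. The paper in fact omits the proof of this theorem, declaring it straightforward; a commented-out draft in the source handles (ii) by applying the homomorphism $\tau^{2}$ in one step (using $\tau^{2}\circ\tau=\tau$ and $\tau^{2}(e)=e_{B}$), which is exactly your double application of $\tau$, and leaves (i) and (iii) to the reader just as you fill them in. One small extra observation the draft records, which you might add: under the hypothesis of (ii) one actually forces $e=e_{B}$, since with $w\tau(x)=\tau(x)w=e$ and $\tau^{2}(w)\tau(x)=e_{B}$ one has
\[
e=w\tau(x)=w\bigl(\tau(x)\tau^{2}(w)\tau(x)\bigr)=\bigl(w\tau(x)\bigr)\bigl(\tau^{2}(w)\tau(x)\bigr)=e\,e_{B}=e_{B},
\]
so in particular $e\in B$; this sharpens your parenthetical remark about when the two identities coincide.
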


\begin{theorem}\label{th:081011A}
Let $(A,\tau )$ be a trivolutive algebra.
\begin{enumeratei}
\item  $x\in A$ can be written uniquely in the form $x=x_1+ix_2$, with $x_1,x_2$  hermitian, if and only if
$x\in \tau (A)$.
\item $f\in A^*$ can be written uniquely in the form $f=f_1+i f_2$, with $f_1,f_2$ hermitian, if and
only if $f\in \tau^*(A^*).$
\item A linear functional $f$ is hermitian if and only if $f$ is real valued on $A_h$ and it vanishes on $\ker \tau$.
\item The map $f \lra \restrict{f}{A_h}$ is an isomorphism
between the real vector space of all hermitian linear functionals and the dual vector space of the real space $A_h$.
\end{enumeratei}
\end{theorem}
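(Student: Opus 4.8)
The plan is to work throughout with the canonical decomposition $A = \ker\tau \oplus B$, $\tau = \rho\circ p$, $p = \tau^2$, $\rho = \restrict{\tau}{B}$, from Theorem~\ref{th:070911B}(iii), together with the fact that $\rho$ is a genuine involution on the subalgebra $B = \tau(A)$. The key structural observation underlying all four parts is that an element $x\in A$ is hermitian, i.e.\ $\tau(x)=x$, precisely when $x\in B$ and $\rho(x)=x$: indeed $\tau(x)=x$ forces $x\in\tau(A)=B$, and then $\tau(x)=\rho(x)$; conversely a $\rho$-fixed point of $B$ is clearly $\tau$-fixed. So $A_h = B_h$, the set of self-adjoint elements of the involutive algebra $(B,\rho)$.

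For (i): if $x = x_1+ix_2$ with $x_1,x_2\in A_h\subset B$, then $x\in B = \tau(A)$, giving necessity. For sufficiency, suppose $x\in B$; since $\rho$ is an involution on $B$, the usual Cartesian decomposition applies, $x_1 = \tfrac12(x+\rho(x))$, $x_2 = \tfrac1{2i}(x-\rho(x))$, both of which lie in $B_h = A_h$, and $\rho(x)=\tau(x)$ on $B$ makes this the desired representation. Uniqueness follows because $A_h$ is a \emph{real} subspace and $A_h\cap iA_h=\{0\}$ (if $h_1 = ih_2$ with $h_1,h_2$ hermitian then applying $\tau$ gives $h_1 = -ih_2$, so $h_1=h_2=0$). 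Part (ii) is the exact dual statement: $f\in A^*$ is hermitian iff $f^\tau = \tau^*(f) = f$, and using $f^{\tau\tau\tau}=f^\tau$ one sees the hermitian functionals are exactly the $\tau^*$-fixed points, which all lie in $\tau^*(A^*)$; conversely on $\tau^*(A^*)$ the map $f\mapsto f^\tau$ is an involution (since $(\tau^*)^3=\tau^*$ restricts to the identity squared on its range, mirroring Theorem~\ref{th:070911B}(i)$\Rightarrow$(ii)), so the same Cartesian argument produces the decomposition, with uniqueness again from the real-subspace/$\cap\, i(\cdot)=\{0\}$ argument applied to hermitian functionals.

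For (iii): if $f$ is hermitian then for $a\in A_h$ we get $f(a) = \overline{f(\tau(a))} = \overline{f(a)}$, so $f$ is real on $A_h$; and for $a\in\ker\tau$, $f(a) = f^\tau(a) = \overline{f(\tau(a))} = \overline{f(0)} = 0$. Conversely, given these two properties, decompose an arbitrary $a = y+z\in\ker\tau\oplus B$ and further $z = z_1+iz_2$ with $z_j\in A_h$ by part (i); then $f(a) = f(z_1) + i f(z_2)$ while $f^\tau(a) = \overline{f(\tau(y+z))} = \overline{f(\rho(z))} = \overline{f(z_1 - iz_2)} = \overline{f(z_1)} - \overline{i f(z_2)} = f(z_1) + i f(z_2) = f(a)$, using reality on $A_h$. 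Finally (iv): the restriction map $f\mapsto\restrict{f}{A_h}$ is clearly real-linear from the hermitian functionals into $(A_h)^*_{\R}$; it is injective by (iii) (a hermitian functional vanishing on $A_h$ also vanishes on $\ker\tau$, hence on $A = \ker\tau\oplus A_h\oplus iA_h$, hence on all of $B$, hence on $A$); and it is surjective because any real-linear $g\colon A_h\to\R$ extends by $\R$-linearity to $A_h\oplus iA_h = B$ (complexifying), then by zero on $\ker\tau$, to a functional on $A$ which by the criterion in (iii) is hermitian and restricts to $g$.

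The only point requiring genuine care — and the main obstacle — is verifying in part (ii) that the hermitian functionals are contained in, and form a "real form" of, $\tau^*(A^*)$: one must check that $\tau^*$ behaves on $A^*$ as a trivolution-type map (conjugate-linear, with $(\tau^*)^3=\tau^*$, already noted in the text), so that $\restrict{\tau^*}{\tau^*(A^*)}$ is a conjugate-linear involution and the Cartesian decomposition on the \emph{range} is legitimate; once this is granted, part (ii) is formally identical to part (i). Everything else is bookkeeping with the direct sum decomposition and the observation $A_h = B_h$.
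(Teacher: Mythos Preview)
Your argument is correct and follows essentially the same route as the paper: the paper omits the proof as straightforward, but the intended argument (visible in a suppressed sketch) uses precisely the canonical decomposition $A=\ker\tau\oplus B$, reduces to the genuine involution $\rho=\restrict{\tau}{B}$ on $B=\tau(A)$, and applies the standard Cartesian decomposition there, exactly as you do. Your opening identification $A_h=B_h$ is a clean way to organize all four parts, and your handling of (ii) via the observation that $\tau^*$ is itself a trivolution-type map on $A^*$ (so that $\restrict{\tau^*}{\tau^*(A^*)}$ is a conjugate-linear involution) is the right way to make that part precise.
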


\section{Involutions on the dual of a topologically introverted space}\label{se:140312A}
The following theorem  is an extension of a result of Civin and Yood \cite[Theorem 6.2]{CiYo61}
to the dual of a topologically introverted space.

\begin{theorem}\label{le:150212A}
Let $A$ be a Banach algebra and $X$, a faithful, topologically (left and right) introverted  subspace of $A^*$.
\begin{enumeratei}
\item If there is a $w^*$-continuous, injective, anti-homomorphism (with respect to either of the Arens products)
$\Theta \colon X^*\lra X^*$ such that $\Theta (A)\subset A$, then the two Arens
products coincide on $X^*$.
\item Let $\theta \colon A\lra  A$   be an involution on $A$ and let $\theta^*\colon A^*\lra A^*$ be its conjugate-linear adjoint.
If $\theta^* (X)\subset X$ and if the two Arens products coincide on $X^*$, then $\Theta=(\restrict{\theta^*}{X})^*\colon X^*\lra X^*$,
is an involution on $X^*$, extending $\theta$.
\end{enumeratei}
\end{theorem}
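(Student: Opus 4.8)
The plan is to treat the two parts separately, since they go in opposite directions. For part (i), the key point is a standard Arens-product fact: for $\Phi \in X^*$, the map $\Psi \mapsto \Psi \Box \Phi$ is always $w^*$-continuous, and the product $\Phi \Box \Psi$ agrees with $\Phi \Diamond \Psi$ whenever $\Phi \in A$ or $\Psi \in A$. So first I would fix $a \in A$ and $\Phi \in X^*$ and compute $\Theta(\Phi \Box a)$. Since $\Theta$ is an anti-homomorphism (say with respect to $\Box$), $\Theta(\Phi \Box a) = \Theta(a) \Box \Theta(\Phi)$, and because $\Theta(a) \in A$ this equals $\Theta(a) \Diamond \Theta(\Phi)$; likewise one can run the computation the other way. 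The strategy is to exploit $w^*$-continuity of $\Theta$ together with $w^*$-density of $A$ in $X^*$ (which holds because $X$ is faithful, so $A \hookrightarrow X^*$ is a $w^*$-dense embedding) to promote an identity known on $A$ to all of $X^*$. Concretely: fix $\Psi \in X^*$ and consider the two $w^*$-continuous maps $\Phi \mapsto \Theta(\Phi \Box \Psi)$ and $\Phi \mapsto \Theta(\Psi)\,?\,\Theta(\Phi)$ in the appropriate product; show they agree on $A$, hence everywhere, then peel off the (injective) $\Theta$. Iterating the density argument in both variables should force $\Phi \Box \Psi = \Phi \Diamond \Psi$ for all $\Phi, \Psi \in X^*$. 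Injectivity of $\Theta$ is used exactly once, at the end, to cancel it.

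For part (ii) the situation is cleaner because we are given that the two Arens products already coincide, so $X^*$ is a genuine Banach algebra with an unambiguous product, which I will just write multiplicatively. Set $\Theta = (\restrict{\theta^*}{X})^*$. First I would check $\Theta$ is a well-defined conjugate-linear contraction on $X^*$: this uses $\theta^*(X) \subset X$ so that $\restrict{\theta^*}{X}\colon X \to X$ makes sense, and conjugate-linearity of $\theta^*$ passes to its adjoint-of-restriction in the usual way, with $\|\Theta\| = \|\restrict{\theta^*}{X}\| \le \|\theta^*\| = 1$. Next, $\Theta \circ \Theta = I$: on $X$ we have $(\restrict{\theta^*}{X})^2 = \restrict{(\theta^*)^2}{X} = \restrict{I}{X}$ because $\theta$ is an involution, hence $(\theta^*)^2 = I$; taking adjoints gives $\Theta^2 = I$ on $X^*$. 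Then I would verify $\Theta$ extends $\theta$: for $a \in A$ viewed in $X^*$ and $\lambda \in X$, $\langle \Theta(a), \lambda \rangle = \overline{\langle a, \restrict{\theta^*}{X}\lambda\rangle} = \overline{\langle a, \theta^*\lambda\rangle} = \overline{\overline{\langle \theta(a),\lambda\rangle}} = \langle \theta(a),\lambda\rangle$, so $\Theta(a) = \theta(a)$.

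The real work in part (ii) is showing $\Theta$ is an anti-homomorphism: $\Theta(\Phi\Psi) = \Theta(\Psi)\Theta(\Phi)$ for all $\Phi,\Psi \in X^*$. This is where I expect the main obstacle to lie, and the tool is again a double $w^*$-density argument combined with separate $w^*$-continuity of the products. On $A$ the identity holds because $\theta$ is an anti-homomorphism on $A$. To pass to $X^*$: fix $a \in A$. The maps $\Psi \mapsto \Theta(\Psi a)$ and $\Psi \mapsto \Theta(a)\Theta(\Psi) = \theta(a)\Theta(\Psi)$ — here I need to know $\Psi \mapsto \Psi a$ and $\Psi \mapsto \theta(a)\Psi$ are $w^*$-$w^*$ continuous, which holds since left multiplication by a fixed element of $X^*$ is $w^*$-continuous (for $\Box$), left multiplication by $a \in A$ coincides with $\Box$ by $a$, and here crucially the two products agree — but $\Theta$ itself is only $w^*$-continuous if $\restrict{\theta^*}{X}$ is norm-continuous, which it is, so $\Theta$ is $w^*$-$w^*$ continuous. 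Hence these two maps agree on the $w^*$-dense set $A$ and therefore on all of $X^*$, giving $\Theta(\Psi a) = \Theta(a)\Theta(\Psi)$ for all $\Psi \in X^*$, $a \in A$. Now fix $\Psi \in X^*$ and repeat in the other variable: $\Phi \mapsto \Theta(\Psi\Phi)$ versus $\Phi \mapsto \Theta(\Phi)\Theta(\Psi)$; both are $w^*$-continuous in $\Phi$ (using $w^*$-continuity of $\Phi \mapsto \Psi\Phi$, $\Phi \mapsto \Theta(\Phi)$, and $\eta \mapsto \eta\,\Theta(\Psi)$), and they agree on $A$ by the previous step, hence everywhere. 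This establishes the anti-homomorphism property, and combined with the earlier points $\Theta$ is an involution on $X^*$ extending $\theta$. The one subtlety to flag is the direction of $w^*$-continuity of multiplication: for the first Arens product $\Psi \mapsto \Psi \Box \Phi$ is $w^*$-continuous for every $\Phi$, whereas $\Phi \mapsto \Psi \Box \Phi$ is $w^*$-continuous only when $\Psi \in A$; because we assume both products coincide, we may use whichever description of $w^*$-continuity is convenient in each variable, and this is precisely what makes the double density argument close.
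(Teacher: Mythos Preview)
Your proposal is correct and follows essentially the same approach as the paper. For part (i) the paper uses an explicit double-net computation---choosing $(a_\alpha)\to\mu$ and $(b_\beta)\to\nu$ in the $w^*$-topology and tracking $\Theta(\mu\Box\nu)$ through iterated limits until it becomes $\Theta(\mu\Diamond\nu)$---whereas you phrase the same idea as ``two $w^*$-continuous maps agree on the $w^*$-dense set $A$''; these are equivalent formulations of the same density argument, and the paper likewise invokes injectivity of $\Theta$ only at the final step. For part (ii) the paper simply says ``by a similar argument as in (i)'', and your detailed write-up (well-definedness, $\Theta^2=I$, extension of $\theta$, then the double-density anti-homomorphism check using the coincidence of Arens products to get $w^*$-continuity in whichever variable is needed) is exactly the argument the paper is gesturing at.
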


\begin{proof}
(i) Let $\mu , \nu \in X^*$, and let $(a_\alpha ), (b_\beta )$  be two nets in $A$ such that $a_\alpha \to \mu$, $b_\beta \to \nu$,
in the $w^*$-topology. Let us assume $\Theta$ is an anti-homomorphism with respect to the first Arens product.
Then $\Theta (\alpha_\alpha )\to \Theta (\mu )$, and $\Theta (b_\beta )\to \Theta (\nu )$.
Hence
\begin{align*}
\Theta (\mu \Box \nu )&=\Theta (\nu )\Box \Theta (\mu )\\
&=w^*\text{-}\lim_\beta \Theta (b_\beta )\Box \Theta (\mu )\\
&= w^*\text{-}\lim_\beta (w^*\text{-}\lim_\alpha \Theta (b_\beta )\Box \Theta (a_\alpha ))\\
&= w^*\text{-}\lim_\beta (w^*\text{-}\lim_\alpha \Theta (b_\beta ) \Theta (a_\alpha ))\\
&= w^*\text{-}\lim_\beta (w^*\text{-}\lim_\alpha \Theta (a_\alpha b_\beta ))\\
&= w^*\text{-}\lim_\beta  \Theta (\mu \Diamond b_\beta )\\
&=\Theta (\mu \Diamond \nu ),
\end{align*}
where $\Diamond$ denotes the second Arens product. Since $\Theta$ is injective, $\mu \Box \nu =\mu \Diamond \nu$, 
which is what we wanted to show.

The claim in (ii) follows by a similar argument as in (i).
\end{proof}

It is well known that the two Arens products coincide  on $WAP(A^*)^*$ and $AP(A^*)^*$ (Dales--Lau \cite[Proposition 3.11]{DaLa05}).
 It is also straight forward
to check that both of these spaces are  invariant under the conjugate-linear adjoint of any involution of $A$.
Therefore if either of these spaces is faithful (which is the case, for example, if the spectrum of $A$ separates the points of $A$;
see Dales--Lau \cite[p.\ 32]{DaLa05}), then  its dual has an involution extending that of $A$.
Hence as a corollary of the above theorem we obtain the following result due to
Farhadi and Ghaharamani (\cite[Theorem 3.5]{FaGh07}).
\begin{corollary}\label{co:150212D}
Suppose that $A$ is an involutive Banach algebra and $X$ is either of the topologically introverted spaces
$AP(A^*), WAP(A^*)$. If $X$ is faithful, then $X^*$ has an involution extending the involution of $A$.
\end{corollary}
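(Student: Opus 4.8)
The plan is to derive Corollary \ref{co:150212D} directly from Theorem \ref{le:150212A} by verifying that the space $X$ (either $AP(A^*)$ or $WAP(A^*)$) satisfies the hypotheses of both parts. First I would recall the standing facts cited just before the corollary: the two Arens products coincide on $WAP(A^*)^*$ and on $AP(A^*)^*$ (Dales--Lau \cite[Proposition 3.11]{DaLa05}), and both $AP(A^*)$ and $WAP(A^*)$ are topologically introverted subspaces of $A^*$. With $X$ faithful by assumption, Theorem \ref{le:150212A}(ii) applies once we check the remaining hypothesis, namely that $\theta^*(X) \subset X$, where $\theta$ is the given involution of $A$ and $\theta^*$ its conjugate-linear adjoint.

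The key step, then, is the invariance claim: if $\theta \colon A \lra A$ is an involution, then $\theta^*$ maps $AP(A^*)$ into $AP(A^*)$ and $WAP(A^*)$ into $WAP(A^*)$. To see this, fix $\lambda \in X$ and consider the map $T_\lambda \colon A \lra A^*$, $a \mapsto a \cdot \lambda$; membership in $[W]AP(A)$ means $T_\lambda$ is [weakly] compact. For $\mu := \theta^*(\lambda)$, I would compute the associated operator $T_\mu$ and express it in terms of $T_\lambda$, $\theta$, and $\theta^*$. Using $\la a \cdot \theta^*(\lambda), b\ra = \la \theta^*(\lambda), ba\ra = \overline{\la \lambda, \theta(ba)\ra} = \overline{\la \lambda, \theta(a)\theta(b)\ra} = \overline{\la \theta(a)\cdot \lambda, \theta(b)\ra}$, one sees that $T_\mu = \theta^* \circ T_\lambda \circ \theta$ up to the appropriate conjugate-linear bookkeeping (here $\theta$ is a linear anti-homomorphism and $\theta^*$ conjugate-linear, so the composite is genuinely linear and bounded). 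Since $\theta$ and $\theta^*$ are bounded linear/conjugate-linear bijections, composition with them preserves [weak] compactness of $T_\lambda$; hence $T_\mu$ is [weakly] compact, i.e.\ $\mu \in [W]AP(A)$. This establishes $\theta^*(X) \subset X$ (in fact equality, by applying the same argument to $\theta^{-1} = \theta$).

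Having verified all hypotheses of Theorem \ref{le:150212A}(ii), I conclude that $\Theta := (\restrict{\theta^*}{X})^* \colon X^* \lra X^*$ is an involution on $X^*$ extending $\theta$, which is exactly the assertion of the corollary. I expect the main obstacle to be purely bookkeeping: keeping the conjugate-linear versus linear roles of $\theta$ and $\theta^*$ straight in the identity relating $T_{\theta^*(\lambda)}$ to $T_\lambda$, and making sure the anti-multiplicativity of $\theta$ is used in the right place (it is what converts $\theta(ba)$ into $\theta(a)\theta(b)$, which is precisely what swaps the order needed to recognise a composition with $T_\lambda$). Once that identity is written correctly, the preservation of [weak] compactness under composition with bounded invertible operators is immediate, and the rest is a direct invocation of the already-proved theorem.
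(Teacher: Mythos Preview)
Your proposal is correct and follows exactly the approach the paper takes: verify the hypotheses of Theorem \ref{le:150212A}(ii) --- in particular the invariance $\theta^*(X)\subset X$ --- and then invoke that theorem; the paper simply asserts the invariance is ``straightforward to check,'' and your operator identity $T_{\theta^*(\lambda)}=\theta^*\circ T_\lambda\circ \theta$ is a clean way to carry out that check. One small slip: you write ``$\theta$ is a linear anti-homomorphism,'' but $\theta$ is conjugate-linear; the composite $\theta^*\circ T_\lambda\circ \theta$ is still $\C$-linear because it is a composition conjugate-linear $\circ$ linear $\circ$ conjugate-linear, and the [weak] compactness argument goes through since conjugate-linear bounded bijections are weak--weak continuous.
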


Let $X\subset A^*$ be a faithful, topologically left introverted subspace of $A^*$.
Let $\sigma (A)$ denote the spectrum of $A$ and let $\varphi \in \sigma (A)\cap X$.  We call an element $m\in X^{*}$  a $\varphi$-topological invariant mean ($\varphi$-TIM)
if $\la m, \varphi  \ra =1$ and $a\cdot m =m\cdot a =\varphi (a)m$ for all $a\in A$.

The following theorem is an extension of a result of Farhadi and Ghahramani \cite[Theorem 3.2(a)]{FaGh07} (see the introduction)
to the dual of topologically left introverted  spaces.

\begin{theorem}\label{le:231111A}
Let $A$ be a Banach algebra and $X\subset A^*$ be a faithful, topologically left introverted subspace of $A^*$.
Let $\varphi \in \sigma (A)\cap X$. If $X^*$ contains at least  two $\varphi$-TIMs,
then $X^{*}$ cannot have an involution $*$ such that $\varphi (a^*)=\overline{\varphi (a)}$
for every $a\in A$ .
\end{theorem}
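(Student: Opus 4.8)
The plan is to argue by contradiction: suppose $X^*$ has an involution $*$ with $\varphi(a^*) = \overline{\varphi(a)}$ for all $a \in A$, and suppose $m_1, m_2 \in X^*$ are two distinct $\varphi$-TIMs. The first observation I would record is that the set of $\varphi$-TIMs is $w^*$-closed and convex (and in fact forms an affine subset of the hyperplane $\{m : \langle m, \varphi\rangle = 1\}$), so two distinct ones give a whole affine line of them; the element $n := m_1 - m_2 \neq 0$ satisfies $\langle n, \varphi\rangle = 0$ and $a \cdot n = n \cdot a = \varphi(a) n$ for all $a \in A$. Thus $\C m_1 + \C n$ is a two-dimensional subspace on which $A$ acts by the character $\varphi$; more importantly, I want to understand how the multiplication $\Box$ of $X^*$ interacts with $\varphi$-TIMs.

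The key algebraic step is to show that right multiplication by a $\varphi$-TIM is governed by $\varphi$: for any $\Phi \in X^*$ and any $\varphi$-TIM $m$, we have $\Phi \Box m = \langle \Phi, \varphi\rangle\, m$. To see this, take a net $(a_\alpha)$ in $A$ with $a_\alpha \to \Phi$ in the $w^*$-topology; since $\Psi \mapsto \Psi \Box \Phi$ is $w^*$-$w^*$ continuous — wait, I actually need continuity in the correct variable, so instead I use that $a_\alpha \Box m = a_\alpha \cdot m = \varphi(a_\alpha) m$, and then pass to the limit: the left side tends to $\Phi \Box m$ $w^*$-ly because $\Psi \mapsto \Psi \Box m$ need not be $w^*$-continuous, but $a_\alpha \Box m = \varphi(a_\alpha)m \to \langle\Phi,\varphi\rangle m$ holds pointwise, so $\Phi \Box m$ agrees with $\langle \Phi, \varphi \rangle m$ as a functional, giving the identity. (If $X$ is only left introverted I work with $\Box$; the right-introverted case with $\Diamond$ is symmetric, and one of the two suffices.) Applying this with $m = m_1, m_2$ and subtracting gives $\Phi \Box n = \langle \Phi, \varphi\rangle n$ for every $\Phi \in X^*$; in particular $n \Box n = \langle n, \varphi\rangle n = 0$, so $n$ is a nonzero nilpotent, and more generally $X^* \Box n = \C n$.

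Now I bring in the involution. From $\varphi(a^*) = \overline{\varphi(a)}$ on $A$ and $w^*$-density of $A$ in $X^*$ together with $w^*$-continuity of $*$ (which holds because $*$ is an involution on the dual algebra $X^* = A^{**}/X^\circ$ and such extensions respect $w^*$-topology — actually this needs care; I would instead use that $*$ is an anti-homomorphism and argue directly), one deduces $\langle \Phi^*, \varphi\rangle = \overline{\langle \Phi, \varphi\rangle}$ for all $\Phi \in X^*$. Applying $*$ to the identity $\Phi \Box n = \langle \Phi, \varphi\rangle n$ yields $n^* \Box \Phi^* = \overline{\langle\Phi,\varphi\rangle}\, n^* = \langle \Phi^*,\varphi\rangle n^*$; since $\Phi$ ranges over all of $X^*$ so does $\Phi^*$, giving $n^* \Box \Psi = \langle\Psi,\varphi\rangle n^*$ for all $\Psi$, i.e. $n^*$ spans a left version of the same phenomenon: $n^* \Box X^* = \C n^*$. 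Combining, $n^* \Box n$ on one hand equals $\langle n,\varphi\rangle n^* = 0$ and on the other hand equals $\langle n^*,\varphi\rangle n = \overline{\langle n,\varphi\rangle} n = 0$ — consistent but not yet a contradiction. The contradiction I expect to extract instead: compute $n \Box n^*$. By the right-sided identity, $n \Box n^* = \langle n, \varphi\rangle n^* = 0$; apply $*$: $(n \Box n^*)^* = n \Box n^* $... I need a sharper input. The cleanest route is: $*$ maps $\varphi$-TIMs to $\varphi$-TIMs (because $(a \cdot m)^* = m^* \Box a^* = a^* \cdot m^*$... check: $m^*$ satisfies $a \cdot m^* = (m \Box a^*)^*\,$? — using $a = (a^*)^*$ and anti-multiplicativity, $a \cdot m^* = (a^*)^* \Box m^* = (m \Box a^*)^* = (\varphi(a^*) m)^* = \overline{\varphi(a^*)} m^* = \varphi(a) m^*$), and similarly on the right, and $\langle m^*, \varphi\rangle = \overline{\langle m,\varphi\rangle} = 1$. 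So $*$ is a conjugate-linear involution on the affine space of $\varphi$-TIMs. But that affine space, translated to the linear space $V = \{n : n \Box X^* = X^* \Box n = \C n,\ \langle n,\varphi\rangle = 0\}$ which contains our nonzero $n$, must then be $*$-invariant, and on $V$ the products $n_1 \Box n_2 = 0$ always. The actual contradiction: pick $n \neq 0$ in $V$; then $n \Box n^* = 0$ and $n^* \Box n = 0$, yet consider $(m_1 + n) \Box m_1^* $ versus expansions — I will instead show the two $\varphi$-TIMs force $X^*$ to contain a nonzero element $n$ with $n \Box n^* = 0 = n^* \Box n$ while also, from the involution applied to $n = n^{**}$ and the rank-one action, $n^* = \langle n^*, \varphi\rangle^{-1}(\cdots)$ — the honest statement is that $n$ generates a two-sided ideal $\C n$ with zero product that is $*$-stable, and since $* $ is conjugate-linear of order two on the one-dimensional $\C n$, we get $n^* = \bar\zeta n$ for some unimodular $\zeta$, but then $0 = n^* \Box n = \bar\zeta (n \Box n)$ is automatic — so I must locate the contradiction in the interaction of $n$ with all of $X^*$, specifically that $\Phi \mapsto \langle \Phi, \varphi \rangle$ being a character forces, via $n$, a violation of associativity or of $\|*\| = 1$-type norm constraints.

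Reassessing, the genuinely clean contradiction — and the step I expect to be the crux — is this: from $\Phi \Box n = \langle\Phi,\varphi\rangle n$ for all $\Phi$, and applying $*$, we get $n^* \Box \Psi = \langle\Psi,\varphi\rangle n^*$ for all $\Psi$; now take $\Psi = n$ in the first-type identity written the other way, $n \Box \Phi = ?$ — this is NOT determined by $\varphi$ in general. But we DO have $n^* \Box n = \langle n, \varphi\rangle n^* = 0$. Apply $*$ to $n^* \Box n = 0$: $n^* \Box n = 0 \Rightarrow (n^* \Box n)^* = 0 \Rightarrow n^* \Box n = 0$, vacuous. Instead apply the left identity with $\Psi = n^*$: $n^* \Box n^* = \langle n^*, \varphi \rangle n^* = 0$. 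Fine. The resolution: I claim $X^* \Box n = \C n$ AND $n \Box X^* = \C n$ both hold — the second by the symmetric argument using $n^*$ and $*$ once more, since $n = (n^*)^*$ and $n^* \Box \Psi = \langle\Psi,\varphi\rangle n^*$ gives, applying $*$, $\Psi^* \Box n = \langle \Psi,\varphi\rangle n = \langle \Psi^*, \varphi\rangle^{-}\!{}^{-} n$; hmm $\langle\Psi,\varphi\rangle = \overline{\langle\Psi^*,\varphi\rangle}$ so $\Psi^* \Box n = \overline{\langle \Psi^*,\varphi\rangle} n$ — but we also have $\Psi^* \Box n = \langle \Psi^*, \varphi\rangle n$. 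Hence $\langle\Psi^*,\varphi\rangle n = \overline{\langle\Psi^*,\varphi\rangle} n$ for all $\Psi$, i.e. $\langle\Xi,\varphi\rangle$ is real for all $\Xi \in X^*$ — but $\varphi \in X$ is nonzero so there is $\Xi$ with $\langle\Xi,\varphi\rangle = i$, contradiction. That is the contradiction. So the main obstacle, and the part requiring the most care, is rigorously establishing the rank-one multiplication identity $\Phi \Box m = \langle\Phi,\varphi\rangle m$ for all $\Phi \in X^*$ and every $\varphi$-TIM $m$ (handling the $w^*$-limit correctly given that $\Psi \mapsto \Psi \Box m$ is not $w^*$-continuous in general — one evaluates against $\lambda \in X$ and uses $\langle \Phi \Box m, \lambda\rangle = \langle \Phi, m \cdot \lambda\rangle$ together with $m \cdot \lambda = \langle m, \lambda\cdot(\cdot)\rangle$... actually $m \cdot \lambda \in A^*$ is computed from $\langle m \cdot \lambda, a\rangle = \langle m, \lambda \cdot a\rangle$ and since $a \cdot m = \varphi(a) m$ one shows $m \cdot \lambda = \langle m, \lambda\rangle \varphi$ when $\varphi \in X$, whence $\langle \Phi \Box m, \lambda\rangle = \langle\Phi, \langle m,\lambda\rangle\varphi\rangle = \langle m,\lambda\rangle\langle\Phi,\varphi\rangle = \langle \langle\Phi,\varphi\rangle m, \lambda\rangle$), plus pinning down the behaviour of $\langle\,\cdot\,,\varphi\rangle$ under $*$, namely $\langle\Phi^*,\varphi\rangle = \overline{\langle\Phi,\varphi\rangle}$ for all $\Phi \in X^*$, which follows from the hypothesis on $A$ by $w^*$-density once one knows $*$ is $w^*$-continuous, or alternatively is deduced purely algebraically from the $\varphi$-TIM identities as just indicated.
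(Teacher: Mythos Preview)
Your argument contains a genuine error in the final step. When you apply $*$ to $n^*\Box\Psi=\langle\Psi,\varphi\rangle\,n^*$, conjugate-linearity of $*$ gives $\Psi^*\Box n=\overline{\langle\Psi,\varphi\rangle}\,n$, not $\langle\Psi,\varphi\rangle\,n$ as you wrote. Once this is corrected, and combined with your (separately unjustified) identity $\langle\Psi,\varphi\rangle=\overline{\langle\Psi^*,\varphi\rangle}$, you obtain $\Psi^*\Box n=\langle\Psi^*,\varphi\rangle\,n$ --- which is precisely the identity $\Phi\Box n=\langle\Phi,\varphi\rangle\,n$ with $\Phi=\Psi^*$. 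The two computations agree and there is no contradiction; the conclusion that $\langle\Xi,\varphi\rangle\in\R$ for all $\Xi$ is spurious. A secondary gap: you repeatedly invoke $\langle\Phi^*,\varphi\rangle=\overline{\langle\Phi,\varphi\rangle}$ for arbitrary $\Phi\in X^*$, but the hypothesis only provides this for $\Phi\in A$, and $*$ is not assumed $w^*$-continuous; you never actually carry out the promised ``purely algebraic'' deduction.

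You do, however, have all the ingredients for a correct proof. You correctly establish (a) $\Phi\Box m=\langle\Phi,\varphi\rangle\,m$ for every $\Phi\in X^*$ and every $\varphi$-TIM $m$ (your computation of $m\cdot\lambda=\langle m,\lambda\rangle\,\varphi$ is the right way to handle the $w^*$-continuity issue), and (b) $m^*$ is again a $\varphi$-TIM (your parenthetical check is fine). The paper finishes directly from these two facts, never needing $\langle\Phi^*,\varphi\rangle=\overline{\langle\Phi,\varphi\rangle}$ for general $\Phi$: by (a) applied to the $\varphi$-TIM $m^*$ one has $m\Box m^*=\langle m,\varphi\rangle\,m^*=m^*$, hence
\[
m=(m^*)^*=(m\Box m^*)^*=(m^*)^*\Box m^*=m\Box m^*=m^*,
\]
so every $\varphi$-TIM is self-adjoint. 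Then for two $\varphi$-TIMs $m_1,m_2$, (a) gives $m_2\Box m_1=m_1$; applying $*$ and using $m_i=m_i^*$ yields $m_1\Box m_2=m_1$, while (a) also gives $m_1\Box m_2=m_2$, so $m_1=m_2$.
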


\begin{proof}
Let us suppose that $X^{*}$ has an involution as in the statement of the theorem. 
Let $m\in X^{*}$ be an arbitrary  $\varphi$-TIM. Then we have
\begin{equation}\label{eq:251111A}
a\cdot m^*=m^*\cdot a=\varphi (a)m^*, \qquad  (a\in A).
\end{equation}
To prove the above identities, we note that for every $a\in A$:
\[
a\cdot m^*=(m\cdot a^*)^*=(\varphi (a^*)m)^*=(\overline{\varphi (a)}m)^*=\varphi (a)m^*.
\]
The other identity in \eqref{eq:251111A} is proved similarly. Using the $w^*$-continuity of the  product $n\Box m$ on the variable $n$,
it follows from \eqref{eq:251111A}   that
\begin{equation}\label{eq:251111B}
n\Box m^*=\la n , \varphi \ra m^*, \qquad (n\in X^{*}).
\end{equation}
Thus using  \eqref{eq:251111B} and the fact that $\la m , \varphi \ra =1$, we get
\begin{equation}\label{eq:251111C}
m=(m^*)^*=(m\Box m^*)^*=m\Box m^*=\la m, \varphi \ra m^*=m^*.
\end{equation}
Now if $m_1,m_2$ are two distinct $\varphi$-TIMs, then using \eqref{eq:251111B} and \eqref{eq:251111C}, we have
\[
m_1=m_2\Box m_1=(m_2\Box m_1)^*=m_1^*\Box m_2^*=m_1\Box m_2=m_2;
\]
which is a contradiction.
\end{proof}

To state a corollary of the above theorem, we first recall a few definitions.
Let $G$ be a locally compact group, $1<p<\infty$,  and  $\mathscr L(L^p(G))$ be the space of continuous linear operators on $L^p(G)$.  Let $\lambda_p \colon M(G)\lra \mathscr L(L^p(G))$,  $\lambda _p(\mu )(g)=\mu *g$,
where $\mu * g(x)=\int_G g(y^{-1}x)\, d\mu (y)$, be the left regular representation of $M(G)$ on $L^p(G)$. The space
$PM_p(G)$ is the $w^*$-closure of $\lambda_p(M(G))$ in $\mathscr L(L^p(G))$. This space is the dual of the Herz--Fig\`a-Talamanca
algebra $A_p(G)$, consisting of all functions $u\in C_0(G)$, such that $u=\sum_{i=1}^\infty g_i*\check f_i$,
where $f_i\in L^p(G), g_i\in L^{q}(G)$,  $1/p+1/q=1$,
and $\sum_{i=1}^\infty \|f_i\|_p\|g_i\|_{q}<\infty$ (Herz \cite{Herz73}).  When $p=2$, $A_2(G)$ and $PM_2(G)$ coincide
respectively, with the Fourier algebra $A(G)$ and the group von Neumann algebra $VN(G)$ studied by Eymard in \cite{Eymard64}.
In the following, for simplicity of notation, we denote $UC(A_p(G))$
by $UC_p(G)$; when $p=2$, this space is also denoted by $UC(\widehat  G)$ in the literature.

\begin{corollary}\label{co:241111A}
 Let $1<p<\infty $, $G$ be  a non-discrete locally compact group, and  $X=PM_p(G)$ or $X=UC_p(G)$.
 Let $e\in G$ be the identity of $G$. Then $X^*$ does not have any involution $*$ such that $ u^*(e)=\overline{u(e)}$,
for every $u\in A_p(G)$.
\end{corollary}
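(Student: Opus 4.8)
The plan is to apply Theorem~\ref{le:231111A} to the Banach algebra $A=A_p(G)$, with $X$ equal to $PM_p(G)$ (so that $X=A^*$) or to $UC_p(G)=UC(A_p(G))$, and with $\varphi=\varphi_e\in\sigma(A_p(G))$ the evaluation character $u\mapsto u(e)$ at the identity $e\in G$.

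First I would verify the hypotheses of Theorem~\ref{le:231111A}. By Herz's theory \cite{Herz73}, $A_p(G)$ is a commutative, semisimple, regular, Tauberian Banach function algebra with Gelfand spectrum $G$ and with $A_p(G)^*=PM_p(G)$, so $\varphi_e\in\sigma(A_p(G))$; under the pairing $\langle u,\lambda_p(\mu)\rangle=\int_G u\,d\mu$ it is carried to the identity operator $I=\lambda_p(\delta_e)\in PM_p(G)$, and since $I\cdot u=u(e)\,I$ for every $u\in A_p(G)$, choosing $u$ with $u(e)\neq0$ shows that $I\in\overline{\lin}\,\bigl(PM_p(G)\cdot A_p(G)\bigr)=UC_p(G)$ as well; hence $\varphi_e\in X$ in both cases. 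For $X=PM_p(G)=A_p(G)^*$, faithfulness and topological introvertedness are automatic, and for $X=UC_p(G)$ topological left introvertedness is the standard property of $UC(A)$; faithfulness of $UC_p(G)$ holds because each operator $\lambda_p(\delta_x)$ satisfies $\lambda_p(\delta_x)\cdot u=u(x)\,\lambda_p(\delta_x)$ and so lies in $UC_p(G)$, while the functionals $u\mapsto u(x)$ separate the points of $A_p(G)\subset C_0(G)$.

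It therefore suffices to show that, for non-discrete $G$, $X^*$ carries at least two $\varphi_e$-TIMs; Theorem~\ref{le:231111A} then forbids any involution $*$ on $X^*$ with $\langle u^*,\varphi_e\rangle=\overline{u(e)}$ for all $u\in A_p(G)$, which is exactly the assertion, for both choices of $X$. To produce one $\varphi_e$-TIM: since $G$ is non-discrete, $\{e\}$ is not open, so along a neighbourhood basis $(V_\alpha)$ of $e$ consisting of compact sets --- chosen so that $\{V_\alpha V_\alpha^{-1}\}$ is again a neighbourhood basis --- I would set $u_\alpha=g_\alpha*\check f_\alpha$ with $f_\alpha=|V_\alpha|^{-1/p}\chi_{V_\alpha}$ and $g_\alpha=|V_\alpha|^{-1/q}\chi_{V_\alpha}$ $(1/p+1/q=1)$; then $u_\alpha\in A_p(G)$, $u_\alpha(e)=\int f_\alpha g_\alpha=1$, $\|u_\alpha\|_{A_p}\le\|f_\alpha\|_p\|g_\alpha\|_q=1$, and $\supp u_\alpha\subseteq V_\alpha V_\alpha^{-1}$, which shrinks to $\{e\}$. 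Let $m$ be any $w^*$-cluster point in $X^*$ of the bounded net $(u_\alpha)$ (it exists by Banach--Alaoglu). Then $\langle m,I\rangle=\lim_\alpha u_\alpha(e)=1$; and for $v\in A_p(G)$, commutativity of $A_p(G)$ gives $v\cdot m=m\,\Box\,v$, so $w^*$-continuity of the map $n\mapsto n\,\Box\,v$ gives $v\cdot m=w^*\text{-}\lim(vu_\alpha)$ along the defining subnet, while $\|vu_\alpha-v(e)u_\alpha\|_{A_p}\to0$: indeed, picking $w\in A_p(G)$ with $w\equiv1$ on a neighbourhood of $e$, one has $vu_\alpha-v(e)u_\alpha=(v-v(e)w)u_\alpha$ for large $\alpha$, $v-v(e)w$ vanishes at $e$, and $\{e\}$ is a set of spectral synthesis for $A_p(G)$, so $v-v(e)w$ is approximated in $A_p$-norm by functions vanishing near $e$, which annihilate $u_\alpha$ for $\alpha$ large. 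Hence $v\cdot m=v(e)m$, so $m$ is a $\varphi_e$-TIM.

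The remaining --- and principal --- point is that this set of $\varphi_e$-TIMs is not a singleton when $G$ is non-discrete; I expect this to be the main obstacle, as it is exactly where non-discreteness enters. (When $G$ is discrete, $\chi_{\{e\}}\in A_p(G)$ is readily seen to be the unique $\varphi_e$-TIM, so the hypothesis is essential.) For abelian $G$ this is transparent: $\varphi_e$-TIMs on $VN(G)^*$ correspond to translation-invariant means on the infinite discrete group $\widehat G$, of which there are at least two. In general one invokes the known non-uniqueness of topological invariant means at a character for the preduals of $VN(G)$, $UC(\widehat G)$ and their $A_p$-analogues for non-discrete $G$ (as studied, for $p=2$, by Granirer, Renaud, Lau--Losert, and others), or argues directly that the net $(u_\alpha)$ above cannot $w^*$-converge in $X^*$. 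Once two $\varphi_e$-TIMs are in hand, Theorem~\ref{le:231111A} completes the proof.
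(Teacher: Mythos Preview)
Your approach is exactly that of the paper: apply Theorem~\ref{le:231111A} to $A=A_p(G)$ with $\varphi=\varphi_e$, and reduce everything to the existence of at least two $\varphi_e$-TIMs in $X^*$. Your verification of the standing hypotheses (faithfulness, left introvertedness, $\varphi_e\in X$) is more detailed than what the paper writes down, and your explicit construction of one $\varphi_e$-TIM via the net $(u_\alpha)$ and spectral synthesis at $\{e\}$ is correct but not needed.

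The only place where your argument is genuinely incomplete is the decisive step you yourself flag: non-uniqueness of $\varphi_e$-TIMs for non-discrete $G$. You gesture at the literature (``Granirer, Renaud, Lau--Losert'' for $p=2$, and ``their $A_p$-analogues'') and at a possible direct argument via non-convergence of $(u_\alpha)$, but neither is carried out. The paper closes this gap by a single precise citation: Granirer \cite[Theorem, p.~3400]{Granirer96} proves that for non-discrete $G$ and \emph{every} $1<p<\infty$, the set of $\varphi_e$-TIMs in $X^*$ has cardinality at least $2^{\mathfrak c}$, for both $X=PM_p(G)$ and $X=UC_p(G)$. With that reference in hand, Theorem~\ref{le:231111A} finishes the proof immediately; your construction of a single TIM and the abelian reduction to invariant means on $\widehat G$ then become superfluous.
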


\begin{proof}
 Let  $\varphi_e \in \sigma (A_p(G))\cap X$ be the evaluation
 functional at $e$. Let $TIM(X^*)$ denote the set of all $\varphi_e$-TIMs on $X^*$.
  Granirer \cite[Theorem, p.~3400]{Granirer96} has shown that if $G$ is non-discrete, then  $|TIM (X^*)| \ge 2^{\frak c}$,
  where $\frak c$ is the cardinality of real numbers. 
Therefore our result follows from Theorem \ref{le:231111A}.
\end{proof}

\begin{remark}
For $p=2$, the cardinality of $TIM (PM_2(G))$ was  determined for second countable groups  by Chou \cite{Chou82},
and in  full generality by Hu \cite{Hu95}.
\end{remark}
  
Let $G$ be a locally compact group and $LUC(G)$ the space of all left uniformly continuous functions on $G$.
It is known, and easy to verify, that the natural restriction map $\pi \colon L^\infty (G)^*\lra LUC(G)^*$
is a continuous algebra homomorphism (with respect to the first Arens product). Using this fact we can prove
the following analogue of Singh's result \cite[Theorem  2.2]{Singh11} for the non-existence of involutions
on $LUC(G)^*$.  Our result  extends Farhadi--Ghahramani \cite[Theorem 3.2(b)]{FaGh07}, from amenable  to subamenable groups.

\begin{theorem}\label{th:260412A}
If $G$ is a non-compact subamenable group, then $LUC(G)^*$ has no involutions.
\end{theorem}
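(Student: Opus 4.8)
The plan is to reduce the non-existence of an involution on $LUC(G)^*$ to the non-existence of an involution on $L^1(G)^{**}$, which is exactly Singh's theorem for subamenable non-compact groups. The key tool is the restriction homomorphism $\pi \colon L^\infty(G)^* \lra LUC(G)^*$, which, as noted just before the statement, is a continuous, surjective algebra homomorphism for the first Arens product. Since $L^1(G)^{**} = L^\infty(G)^*$, we want to transport a hypothetical involution on $LUC(G)^*$ back up to $L^1(G)^{**}$ along $\pi$, or alternatively to produce directly a right ideal of small dimension and contradict the Filali--Pym / Filali--Salmi lower bound $2^{2^{\kappa(G)}}$.

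First I would suppose, for contradiction, that $\ast$ is an involution on $LUC(G)^*$. The natural thing is to observe that $\pi$ has a canonical right inverse or at least that $L^1(G)$ embeds compatibly into both algebras: $L^1(G)$ is a (dense) subalgebra of $L^1(G)^{**}$, $\pi$ restricted to $L^1(G)$ is the identity (viewing $L^1(G) \subset LUC(G)^*$), and $LUC(G)$ is a faithful left introverted subspace of $L^\infty(G)$. A hypothetical involution on $LUC(G)^*$ would then, via the adjoint construction, relate to an involution-like map on $L^\infty(G)^*$; but more robustly, one expects the argument to mimic Grosser's and Singh's: an involution sends a non-trivial left ideal to a non-trivial right ideal and preserves linear dimension. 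Since $LUC(G)^*$, for $G$ non-compact, contains non-trivial right ideals (indeed by Filali--Salmi every non-trivial right ideal has dimension at least $2^{2^{\kappa(G)}}$) while subamenability of $G$ furnishes a linear subspace spanned by a set of cardinality $< 2^{2^{\kappa(G)}}$ that is a left ideal of $L^1(G)^{**}$, one gets a contradiction once one checks that $\pi$ carries this small left ideal to a non-trivial left ideal of $LUC(G)^*$ (non-triviality uses that $\pi$ is surjective and that the mean $M$ in the definition of $\alpha$-amenability does not lie in $LUC(G)^\circ$, or can be arranged not to). Applying $\ast$ then produces a non-trivial right ideal of $LUC(G)^*$ of dimension $< 2^{2^{\kappa(G)}}$, contradicting Filali--Salmi.

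The order of steps I would carry out: (1) recall $\pi \colon L^\infty(G)^* \lra LUC(G)^*$ is a surjective algebra homomorphism for $\Box$, with kernel $LUC(G)^\circ$; (2) use subamenability of $G$ to fix $\mathcal F \subset L^1(G)^{**}$ with $|\mathcal F| = \alpha < 2^{2^{\kappa(G)}}$, containing a mean $M$, with $\lin \mathcal F$ a left ideal of $L^1(G)^{**}$; (3) show $\pi(\lin \mathcal F)$ is a left ideal of $LUC(G)^*$ of linear dimension at most $\alpha$, and is non-zero (here one invokes that a mean does not vanish on the constant function, which lies in $LUC(G)$, so $\pi(M) \neq 0$); (4) apply the hypothetical involution $\ast$ to get a non-zero right ideal $\bigl(\pi(\lin\mathcal F)\bigr)^\ast$ of $LUC(G)^*$ of dimension $\le \alpha < 2^{2^{\kappa(G)}}$; (5) invoke Filali--Salmi \cite[Theorem 6]{FiSa07}: for $G$ non-compact, every non-trivial right ideal of $LUC(G)^*$ has dimension $\ge 2^{2^{\kappa(G)}}$ — contradiction.

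The main obstacle I anticipate is step (3): verifying that $\pi$ maps a left ideal of $L^1(G)^{**}$ to a left ideal of $LUC(G)^*$ (this is immediate from $\pi$ being a surjective homomorphism, since $\pi(J)\,LUC(G)^* = \pi(J)\,\pi(L^1(G)^{**}) = \pi(J\, L^1(G)^{**}) \subseteq \pi(J)$), and, more delicately, confirming non-triviality, i.e. that $\lin\mathcal F \not\subseteq \ker\pi = LUC(G)^\circ$. The cleanest route is to note that any mean $M$ satisfies $\la M, \mathbf 1\ra = 1$ where $\mathbf 1 \in LUC(G)$, so $M \notin LUC(G)^\circ$, hence $\pi(M) \neq 0$ and $\pi(\lin\mathcal F) \neq \{0\}$; one should also record that this right ideal is proper (it cannot be all of $LUC(G)^*$ for dimension reasons when $G$ is non-compact), so that "non-trivial" in Filali--Salmi genuinely applies. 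Everything else is bookkeeping with cardinal arithmetic and the already-cited structural facts.
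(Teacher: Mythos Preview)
Your proposal is correct and follows essentially the same route as the paper: push the small left ideal furnished by subamenability through the surjective restriction homomorphism $\pi \colon L^\infty(G)^* \to LUC(G)^*$, note that it stays non-zero because the mean does not vanish on $\mathbf 1 \in LUC(G)$, and then contrast its dimension with the Filali--Pym/Filali--Salmi lower bound $2^{2^{\kappa(G)}}$ for non-trivial right ideals of $LUC(G)^*$ to rule out any involution. The paper cites Filali--Pym \cite[Theorem~5]{FiPy03} for the right-ideal bound in $LUC(G)^*$ rather than Filali--Salmi, and it leaves the final step (an involution would turn the small left ideal into a small right ideal) implicit, but otherwise the arguments coincide; your worry about properness is unnecessary since ``non-trivial'' here just means non-zero.
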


\begin{proof}
Since $G$ is subamenable, there exists a subset $\mathcal F$ of $L^\infty (G)^*$ containing a mean $m$ such that
$|\mathcal F|< 2^{2^{\kappa (G)}}$, and the linear span of $\mathcal F$ is a left ideal $J$ of $L^\infty (G)^*$.
Since $\pi \colon L^\infty (G)^*\lra LUC(G)^*$ (defined above)  is  a continuous
 homomorphism,  it follows that $\pi (J)$ is a non-trivial left ideal in $LUC(G)^*$; $\pi (J)$ is non-trivial
since $\pi (m)\ne 0$. Since the dimension of $\pi (J)$ is less than or equal to the dimension of $J$,
it follows that $LUC(G)^*$ has a non-trivial left ideal of dimension less than $2^{2^{\kappa (G)}}$.
By Filali--Pym \cite[Theorem 5]{FiPy03}, if $G$  is a non-compact locally compact group, every non-trivial right 
ideal of $LUC(G)^*$ has  dimension  at least $2^{2^{\kappa (G)}}$. It follows that for non-compact subamenable
groups $G$, $LUC(G)^*$ cannot  have any involutions.
\end{proof}

\section{Trivolutions on the duals of  introverted spaces}\label{se:140312B}

In Corollary \ref{co:241111A} and  Theorem \ref{th:260412A}  we saw several examples of topologically
left introverted spaces $X$ for which there can be no involution on $X^{*}$. Our objective in  this section is to consider
some cases for which  $A^{**}$ or $X^*$  admits trivolutions.

\begin{theorem}\label{th:300112B}
Let $A$ be a non-zero Banach algebra with an involution $\theta$. Under each of the following conditions, $A^{**}$ 
admits a trivolution.
\begin{enumeratei}
\item There exists a non-zero topologically introverted, faithful subspace
$X\subset A^*$, such that the two Arens products coincide on $X^*$, $\theta^*(X)\subset X$,  and $A^{**}=X^\circ \oplus X^*$.
\item  $A$ is a dual Banach algebra.
\item $A$ has a bounded two-sided approximate identity and is a right ideal in $(A^{**},\Box  )$.
\item $A^{**} = I\oplus A$, as a topological direct sum of closed subspaces $I$ and $A$, with
$I$ as an ideal in $A^{**}$,  the projection $p$ on $A^{**}$ onto $A$ being a homomorphism.
\end{enumeratei}
\end{theorem}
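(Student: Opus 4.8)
The plan is to treat each of the four conditions separately and, in every case, to exhibit an explicit trivolution on $A^{**}$ by first locating a topological direct-sum decomposition $A^{**}=I\oplus B$ with $B$ a subalgebra carrying an involution $\rho$ and $p\colon A^{**}\lra B$ an algebra homomorphism projection, and then invoking the equivalence (i)$\Longleftrightarrow$(iii) of Theorem \ref{th:070911B} to conclude that $\tau:=\rho\circ p$ is a trivolution (the norm can be rescaled so that $\|\tau\|=1$). So the real content is, in each case, producing the decomposition together with the involution on the complemented subalgebra.

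For (i): here $X^*$ is identified with the quotient $A^{**}/X^\circ$, and since $A^{**}=X^\circ\oplus X^*$ as Banach spaces, $X^*$ is realized as a closed complemented subalgebra of $A^{**}$ and $X^\circ$ is a closed two-sided ideal (it is always a two-sided ideal, being the kernel of the quotient homomorphism). Because the two Arens products agree on $X^*$ and $\theta^*(X)\subset X$, Theorem \ref{le:150212A}(ii) gives an involution $\Theta=(\restrict{\theta^*}{X})^*$ on $X^*$ extending $\theta$; taking $B=X^*$, $\rho=\Theta$, $I=X^\circ$, and $p$ the projection along $X^\circ$ (which is an algebra homomorphism since $X^\circ$ is an ideal and the product on $X^*=A^{**}/X^\circ$ is the quotient product), Theorem \ref{th:070911B}(iii) applies. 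For (ii): if $A$ is a dual Banach algebra, $A=(A_*)^*$ and the canonical projection $\pi\colon A^{**}\lra A$ (adjoint of the inclusion $A_*\hookrightarrow A^*$) is a $w^*$-continuous algebra homomorphism whose kernel $I=(A_*)^\circ$ is a $w^*$-closed two-sided ideal, giving $A^{**}=I\oplus A$ with $p=\pi$; take $B=A$ and $\rho=\theta$, the original involution. For (iii): a bounded two-sided approximate identity gives a mixed identity $E\in A^{**}$, and when $A$ is a right ideal in $(A^{**},\Box)$ the map $p\colon\Phi\mapsto \Phi\Box E$ (or the appropriate one-sided multiplication by $E$) is a projection of $A^{**}$ onto $A$ which is an algebra homomorphism, with kernel a closed two-sided ideal; again $B=A$, $\rho=\theta$. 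Condition (iv) is essentially the hypothesis of Theorem \ref{th:070911B}(iii) handed to us directly: $p$ is the given homomorphism projection onto $B=A$, $I$ the given ideal, $\rho=\theta$, and $\tau=\theta\circ p$.

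The routine verifications I would suppress are: that $X^\circ$ (resp.\ the relevant kernels) are genuinely two-sided ideals and that the quotient/projection maps are algebra homomorphisms for the chosen Arens product; that the relevant mixed identity $E$ exists and acts as claimed in case (iii); and that in case (iv) one may replace the given $A$-valued structure by the involution $\theta$ without losing the homomorphism property. In each case once $A^{**}=I\oplus B$, $p$ a homomorphism projection, and $\rho$ an involution on $B$ are in hand, Theorem \ref{th:070911B} does the rest, and the resulting $\tau$ is non-zero because $\rho$ is a bijection of the non-zero algebra $B$ onto itself.

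The main obstacle I anticipate is case (iii): one must check carefully that the multiplication by the mixed identity $E$ really is a \emph{homomorphism} of $(A^{**},\Box)$ onto $A$ and not merely a linear projection. This uses both the two-sidedness of the approximate identity (so that $E$ behaves as a genuine two-sided identity on the subalgebra $A$ and the product $\Phi\Box\Psi$ interacts correctly with $E$) and the right-ideal hypothesis (to guarantee the image lands in $A$ and that $p(\Phi\Box\Psi)=p(\Phi)\Box p(\Psi)$). Once that identity is established, the decomposition $A^{**}=\ker p\oplus A$ and the application of Theorem \ref{th:070911B}(iii) are immediate.
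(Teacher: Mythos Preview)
Your treatment of (i), (ii), and (iv) is correct and essentially coincides with the paper's: in each case one appeals directly to Theorem~\ref{th:070911B}(iii) with $B=X^*$ (respectively $B=A$), the involution coming from Theorem~\ref{le:150212A}(ii) in case (i) and from $\theta$ itself in cases (ii) and (iv).

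The gap is in (iii). Your proposed projection $p(\Phi)=\Phi\Box E$ is not a projection onto $A$: a $w^*$-cluster point $E$ of a bounded approximate identity is a \emph{right} identity for $\Box$, so $\Phi\Box E=\Phi$ for every $\Phi\in A^{**}$ and $p$ is the identity map. The other candidate $\Phi\mapsto E\Box\Phi$ is indeed an idempotent algebra homomorphism, but its range $E\Box A^{**}$ need not be $A$. Take $A=c_0$ with pointwise product: it has a bounded approximate identity, it is a (two-sided) ideal in $A^{**}=\ell^\infty$, yet the unique mixed identity is the constant sequence $E=\mathbf 1$, so $E\Box\Phi=\Phi$ as well. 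In fact there is \emph{no} homomorphic projection of $\ell^\infty$ onto $c_0$: such a $p$ would force $p(\mathbf 1)$ to be an idempotent of $c_0$, hence of finite support, and then $x=p(x)=p(\mathbf 1)p(x)$ for all $x\in c_0$ would be finitely supported, a contradiction. So the plan ``$B=A$, $\rho=\theta$'' cannot succeed under hypothesis (iii) in general.

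The paper avoids this by not projecting onto $A$ at all. Using \cite{BLP98} it observes that a bounded two-sided approximate identity yields $A^{**}\cong (A^*\cdot A)^\circ\oplus (A^*\cdot A)^*$, and that $A$ being a right ideal in $(A^{**},\Box)$ forces $A^*\cdot A=WAP(A^*)$. Thus $A^{**}\cong WAP(A^*)^\circ\oplus WAP(A^*)^*$, and since the two Arens products coincide on $WAP(A^*)^*$ and $WAP(A^*)$ is faithful here, Corollary~\ref{co:150212D} supplies an involution on $WAP(A^*)^*$ extending $\theta$. One then applies Theorem~\ref{th:070911B}(iii) with $B=WAP(A^*)^*$ rather than $B=A$. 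Your identification of (iii) as the delicate case was right; what is needed is a different complemented subalgebra.
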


\begin{proof}
(i) This follows from Theorem \ref{le:150212A}(ii) and Theorem \ref{th:070911B}(iii).

(ii) Let $A_*$ be a predual of $A$, and consider the canonical Banach space decomposition
$A^{**}=(A_*)^\circ \oplus A  $ (cf.\ Dales \cite[p.~241]{Dales00}).
 It is easy to verify that  $A_*$  is a topologically introverted
subspace of $A^*$, and clearly the two Arens products coincide on $(A_*)^*=A$.
Hence (ii) follows from Theorem \ref{th:070911B}(iii).

(iii) Since $A$ has a bounded two-sided approximate identity  we have the decomposition
$A^{**}\cong (A^*\cdot A)^\circ \oplus (A^*\cdot A)^*$; and since $A$ is a right ideal
in its second dual, we have $WAP(A^*)=A^*\cdot A$ (cf.\ \cite[Corollary 1.2, Theorem 1.5]{BLP98}).
Therefore, $A^{**}\cong WAP(A^*)^\circ  \oplus WAP(A^*)^*$.
It is easy to check that under these conditions, $WAP(A^*)$ is faithful, and therefore
our result follows by Corollary \ref{co:150212D}  and Theorem \ref{th:070911B}(iii).

(iv) An appeal to Theorem \ref{th:070911B} gives the result. 
\end{proof}

Next we study trivolutions on the Banach algebra $L^\infty(G)^*$, equipped with its first Arens product $\Box.$ Here $G$  is a locally compact group. For simplicity of notation, in the following we shall denote $E\Box F$ by $EF$, whenever $E,F\in L^\infty(G)^*$. If $K\subset G$ is  measurable and  $f\in L^\infty  (G)$, let 
\[
\|f\|_K=\esssup { \{|f(x)|\colon x\in K\}},
\]
and let $L_0^\infty (G)$ be the closed ideal of $L^\infty (G)$ consisting of all $f\in L^\infty (G)$ such that
for given $\epsilon >0$,  there exists a compact set $K \subset G$ such that  $\|f\|_{G\setminus K}<\epsilon$.

 In \cite[Theorems 2.7 and  2.8]{LaPy90}, Lau and Pym showed that $L_0^\infty (G)$ is a faithful, topologically introverted
 subspace of $L^\infty(G)$ and  $L^\infty (G)^*$  is the Banach space direct sum
\begin{equation}\label{eq:010412A}
L^\infty(G)^*=L_0^\infty (G)^\circ \oplus L^\infty_0(G)^*.
\end{equation}
In this decomposition  $L_0^\infty (G)^*$ is identified with the closed subalgebra of $L^\infty (G)^*$ defined as the norm closure
of  elements in $L^\infty (G)^*$  with compact carriers ($F\in L^\infty (G)^*$ has compact carrier if for some compact set $K$,
$F(f)=F(\chi_Kf)$ for every $f\in L^\infty (G)$).  In addition, Lau and Pym showed that if
$\pi \colon L^\infty (G)^*\lra LUC(G)^*$ is the natural restriction map, then   $\pi (L_0^\infty (G)^*)=M(G)$.
Lau and Pym \cite{LaPy90} make a case for the study of $L_0^{\infty}(G)^{\ast}$ for general $G$ (in place of $L^1(G)^{\ast \ast}$).
In \cite{Singh99}, the third named author  has expressed $L_0^{\infty}(G)^{\ast}$ as the second dual of $L^1(G)$
with a locally convex topology similar to the strict topology (see also \cite{GrLo84}).
 Let  $\mathscr E(G)$ denote  the set of all right identities of $L^\infty (G)^*$, and
$\mathscr E_1(G)$ the set of those with norm  one.  In $L^\infty (G)^*,$ when $G$ is not discrete,
there is an abundance of such right identities, a fact noted and well-utilized in
(\cite{Grosser84}, \cite{LaPy90}, \cite{Singh99}, \cite{FaGh07}),  for instance.

For the convenience of our  readers, we shall now state the following result of Lau and Pym (\cite{LaPy90},  Theorems 2.3 and 2.11)
which will be needed  in the last three theorems of this paper. In the following results all products are with respect to the first Arens product.

\begin{theorem}[Lau--Pym]\label{th:140911A}
Let $G$ be a locally compact group and let the map $\pi \colon L^\infty (G)^*\lra LUC(G)^*$ be the natural restriction map. Then
\begin{enumeratei}
\item $\mathscr E_1(G)\subset L_0^\infty (G)^*$.
\item For each $E\in \mathscr E (G)$, $\restrict{\pi}{EL^\infty (G)^*}$ is a continuous isomorphism  from $EL^\infty (G)^*$ to $LUC (G)^*$,
and if $\|E\|=1$, the isomorphism is an isometry.
\item For each $E\in \mathscr E_1(G)$,  $L_0^\infty (G)^*=E L_0^\infty (G)^* + (\ker \pi \cap L_0^\infty (G)^*)$,
and the algebra $EL_0^\infty (G)^*$ is isometrically isomorphic with $M(G)$ via $\pi$.
\end{enumeratei}
\end{theorem}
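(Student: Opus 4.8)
The plan is to establish the three assertions in turn, the common ground being the structure of $LUC(G)^*$ as a unital Banach algebra under $\Box$ and the good behaviour of the restriction map $\pi$, which, as recalled above, is a continuous surjective algebra homomorphism with kernel $LUC(G)^\circ$. I would first isolate three facts. (a) $\delta_e\in M(G)\subset LUC(G)^*$ is a two-sided identity for $LUC(G)^*$: for $n\in LUC(G)^*$ and $u\in LUC(G)$ one checks $\la\delta_e\Box n,u\ra=(n\cdot u)(e)=\la n,u\ra$ and $\delta_e\cdot u=u$, whence also $n\Box\delta_e=n$. (b) For every $E\in\mathscr E(G)$, applying $\pi$ to $F\Box E=F$ and using surjectivity shows $\pi(E)$ is a right identity of $LUC(G)^*$, so $\pi(E)=\delta_e\Box\pi(E)=\delta_e$. (c) $L^\infty(G)^*\Box LUC(G)^\circ=\{0\}$: if $H\in LUC(G)^\circ$ and $u\in L^\infty(G)$ then $\la H\cdot u,a\ra=\la H,u\cdot a\ra=0$ for all $a\in L^1(G)$ (because $u\cdot a\in L^\infty(G)\cdot L^1(G)\subset LUC(G)$), so $H\cdot u=0$, and hence $\la F\Box H,u\ra=\la F,H\cdot u\ra=0$ for all $F$.

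For (ii): since $F\Box E=F$ forces $E\Box E=E$, the map $P_E:=E\Box(\cdot)$ is a bounded idempotent on $L^\infty(G)^*$, so $EL^\infty(G)^*$ is a closed left ideal. The restriction $\restrict{\pi}{EL^\infty(G)^*}$ is onto $LUC(G)^*$ (given $g$, pick $F$ with $\pi(F)=g$; then $\pi(E\Box F)=\delta_e\Box g=g$) and injective, since $\pi(F_1)=\pi(F_2)$ gives $F_1-F_2\in LUC(G)^\circ$, whence $E\Box F_1-E\Box F_2=0$ by (c). A continuous algebra bijection between Banach spaces is a topological algebra isomorphism, by the open mapping theorem. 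When $\|E\|=1$ it is isometric: for $G\in EL^\infty(G)^*$ one has $G=E\Box G$, so $\la G,u\ra=\la E,G\cdot u\ra$ for $u\in L^\infty(G)$, and since $\la G\cdot u,a\ra=\la G,u\cdot a\ra=\la\pi(G),u\cdot a\ra$ one gets $\|G\cdot u\|_\infty\le\|\pi(G)\|\,\|u\|_\infty$, hence $|\la G,u\ra|\le\|\pi(G)\|\,\|u\|_\infty$; with $\|\pi(G)\|\le\|G\|$ this yields $\|G\|=\|\pi(G)\|$.

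For (iii) I would use the Lau--Pym facts that $L_0^\infty(G)^*$ is a closed subalgebra of $L^\infty(G)^*$ and that $\pi(L_0^\infty(G)^*)=M(G)$, together with (i). Given $G\in L_0^\infty(G)^*$, write $G=E\Box G+(G-E\Box G)$; then $E\Box G\in EL_0^\infty(G)^*$, and $G-E\Box G\in L_0^\infty(G)^*$ with $\pi(G-E\Box G)=\pi(G)-\delta_e\Box\pi(G)=0$, so $G-E\Box G\in\ker\pi\cap L_0^\infty(G)^*$; this is the asserted splitting. Moreover $\pi(EL_0^\infty(G)^*)=\delta_e\Box\pi(L_0^\infty(G)^*)=M(G)$, so by the injectivity (and, for $\|E\|=1$, the isometry) established in (ii), $\pi$ restricts to an isometric algebra isomorphism $EL_0^\infty(G)^*\to M(G)$.

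The delicate assertion is (i): a norm-one right identity $E$ already lies in $L_0^\infty(G)^*$. Here $E$ is a state on the unital $C^*$-algebra $L^\infty(G)$ — indeed $\|E\|=1$ and $\la E,1\ra=\la\pi(E),1\ra=\la\delta_e,1\ra=1$ — with $\restrict{E}{LUC(G)}=\delta_e$ and, by (c), $E$ annihilating $LUC(G)^\circ$. To conclude that the component of $E$ along $L_0^\infty(G)^\circ$ in the decomposition \eqref{eq:010412A} vanishes, one must control $E$ on all of $L_0^\infty(G)$, and this is the real work: $\overline{\lin}\bigl(L_0^\infty(G)\cdot L^1(G)\bigr)$ is only $C_0(G)\cap LUC(G)$, strictly smaller than $L_0^\infty(G)$, so (c) by itself does not suffice. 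The remedy, following Lau and Pym, is to realise right identities as $w^*$-cluster points of norm-one positive approximate identities of $L^1(G)$ with supports shrinking to $\{e\}$, and to track the compact-carrier ``masses'' through the limit; together with the positivity of the state $E$ this forces $\la E,f\ra$ for $f\in L_0^\infty(G)$ to behave as if $E$ had compact carrier, pinning the $L_0^\infty(G)^\circ$-component of $E$ at $0$. Every other step — that $\pi$ is a $\Box$-homomorphism, that the various maps above are homomorphisms, and the identifications with $M(G)$ — is routine bookkeeping, so I expect the main obstacle to be exactly this transfer of compact-support information through the $w^*$-limit in (i).
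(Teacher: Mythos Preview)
The paper does not prove this theorem: it is quoted verbatim from Lau--Pym \cite[Theorems 2.3 and 2.11]{LaPy90} and stated only ``for the convenience of our readers''. There is therefore no proof in the paper against which to compare your proposal.

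That said, a brief comment on your sketch. Parts (ii) and (iii) are essentially complete and correct. For (i), you have correctly isolated both the difficulty and the key ingredient (positivity of $E$), but the route you outline --- realising an \emph{arbitrary} norm-one right identity as a $w^*$-cluster point of a positive approximate identity with shrinking supports --- is not justified and is in fact unnecessary. Once you know that $E$ is a state with $\restrict{E}{LUC(G)}=\delta_e$, take any compact neighbourhood $V$ of $e$ and a function $\phi\in LUC(G)$ with $0\le\phi\le\chi_V$ and $\phi(e)=1$; positivity gives $1=E(\phi)\le E(\chi_V)\le E(1)=1$, so $E(\chi_V)=1$. Then for every $f\in L^\infty(G)$ one has $|E(f)-E(\chi_Vf)|\le \|f\|_\infty\,E(1-\chi_V)=0$, so $E$ has compact carrier $V$ and hence lies in $L_0^\infty(G)^*$. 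This is the clean positivity argument your last paragraph is reaching for; the detour through approximate identities is a red herring.
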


\begin{theorem}\label{th:150911A}
Let $G$ be a non-discrete locally compact group and  $X$ and $Y$ be subalgebras of $L^\infty (G)^*$ with $L_0^\infty (G)^*\subset Y \subset X$.
Then there are no trivolutions of $X$ onto $Y$. In particular, $L^\infty(G)^{*}$ has no trivolutions with range
$L_0^\infty (G)^*$.
\end{theorem}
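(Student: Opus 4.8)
The plan is to derive a contradiction from the assumption that a trivolution $\tau\colon X\lra Y$ exists, by exploiting the rich supply of norm-one right identities in $L^\infty(G)^*$ guaranteed by Theorem \ref{th:140911A} when $G$ is non-discrete, together with the structural results on $L_0^\infty(G)^*$. First I would observe that since $\tau$ is surjective onto $Y$ and $L_0^\infty(G)^*\subset Y$, the range $B=\tau(X)=Y$ is a subalgebra containing $L_0^\infty(G)^*$, and by Theorem \ref{th:070911B} the restriction $\rho=\restrict{\tau}{Y}$ is an involution on $Y$. Pick $E\in\mathscr E_1(G)$; by Theorem \ref{th:140911A}(i), $E\in L_0^\infty(G)^*\subset Y$. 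The key point is that $E$ is a right identity of all of $L^\infty(G)^*$, hence of $X$ and of $Y$; applying Theorem \ref{ex:120911B}(i) to the anti-homomorphism $\tau$ on $X$ shows $\tau(E)=E$ and $E$ is the (two-sided) identity of $B=Y$.

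Next I would exploit the fact that $L^\infty(G)^*$ admits many distinct norm-one right identities when $G$ is non-discrete. Indeed, for $E\in\mathscr E_1(G)$ and any $F\in\mathscr E_1(G)$, both lie in $L_0^\infty(G)^*\subset Y$, and both must equal the identity of the unital algebra $(Y,\Box)$ by the uniqueness part, Theorem \ref{ex:120911B}(ii). This forces $\mathscr E_1(G)$ to be a singleton. But this contradicts the well-documented abundance of norm-one right identities in $L^\infty(G)^*$ for non-discrete $G$ (as recorded in the discussion preceding Theorem \ref{th:140911A}, and originally in Lau--Pym \cite{LaPy90}): when $G$ is non-discrete one can produce at least two distinct elements of $\mathscr E_1(G)$, for instance by taking $w^*$-cluster points of two bounded approximate identities supported on disjoint shrinking neighbourhoods of $e$. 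This contradiction establishes that no trivolution of $X$ onto $Y$ exists, and the final assertion follows by specializing $X=L^\infty(G)^*$, $Y=L_0^\infty(G)^*$.

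The main obstacle I anticipate is making precise and rigorous the claim that $\mathscr E_1(G)$ has more than one element when $G$ is non-discrete; while this is ``well known'' (and cited in the excerpt as a fact well-utilized in \cite{Grosser84,LaPy90,Singh99,FaGh07}), the cleanest route is probably to note that if $G$ is non-discrete then $\{e\}$ has empty interior, so one can choose two sequences (or nets) of nonnegative norm-one $L^1$-functions $(u_n)$ and $(v_n)$ with shrinking, pairwise-disjoint supports accumulating at $e$; any $w^*$-cluster points $E,F\in L^\infty(G)^*$ are right identities of norm one, and they are distinct because they can be separated by a suitable $f\in L^\infty(G)$ that is $1$ near the support points of one net and $0$ near those of the other. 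A subtler variant of the argument, avoiding any appeal to two right identities, would instead use the isometric isomorphism $\pi\colon EL_0^\infty(G)^*\to M(G)$ from Theorem \ref{th:140911A}(iii) to transport $\rho$ to an involution on a ``large'' quotient and contradict the dimension lower bounds of Filali--Pym \cite{FiPy03}; but the right-identity uniqueness route is shorter and self-contained given the results already in the excerpt, so that is the one I would carry out.
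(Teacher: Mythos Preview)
Your argument is correct and shares its opening moves with the paper's proof: both use Theorem~\ref{th:140911A}(i) to place every $E\in\mathscr E_1(G)$ inside $L_0^\infty(G)^*\subset Y$, and then invoke Theorem~\ref{ex:120911B}(i)--(ii) to conclude that each such $E$ is \emph{the} two-sided identity of $Y$. The two proofs diverge only in how they extract a contradiction from this. You argue that the identity of $Y$ is unique, whence $\mathscr E_1(G)$ is a singleton, contradicting the abundance of norm-one right identities for non-discrete $G$; this handles the compact and non-compact cases in one stroke, at the cost of having to justify $|\mathscr E_1(G)|>1$ (your sketch with two bounded approximate identities whose supports shrink to $e$ along disjoint sets, separated by a bounded measurable $f$ discontinuous at $e$, is essentially the standard argument and is sound). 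The paper instead splits into cases: for compact $G$ one has $X=Y=L^\infty(G)^*$, so the trivolution would be an involution, ruled out by Grosser~\cite{Grosser84}; for non-compact $G$ the paper observes that the identity of $Y$ is a fortiori the identity of $L_0^\infty(G)^*$, hence lies in its topological centre, which equals $L^1(G)$ by Budak--I\c{s}\i k--Pym~\cite{BIP11}, impossible since $G$ is non-discrete. Your route is more self-contained (it avoids the external topological-centre result), while the paper's route avoids constructing two explicit right identities; both are short once the common first step is in place.
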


\begin{proof}
If $G$ is compact, then $X=Y=L^\infty(G)^*$, and hence any trivolution of $X$ onto $Y$ is 
an involution on $L^\infty(G)^*$.  Such an involution does not exist if $G$ is non-discrete by
Grosser \cite[Theorem 2]{Grosser84}.

Let $G$ be non-compact. To obtain a contradiction, let $\rho$ be a trivolution from $X$ onto $Y$. 
By  Theorem \ref{th:140911A}(i), $\mathscr E_1(G)\subset Y$.
By Theorem \ref{ex:120911B}(i),  each $E$ in $\mathscr E_1(G)$ is the identity for $Y$, and therefore, also the identity
for $L_0^\infty (G)^*$.  But the identity for $L_0^\infty (G)^*$ is clearly in the topological centre of $L_0^\infty (G)^*$
and so it belongs to $L^1(G)$ (cf.\ Budak--I\c{s}\i k--Pym \cite[Proposition 5.4]{BIP11}), which is not possible since $G$ is not discrete.
\end{proof}

\begin{theorem}\label{co:150911B}
The algebra $L_0^\infty (G)^*$ has an involution if and only if $G$ is discrete. Further, if $G$ is discrete, $L^\infty (G)^*$
has a trivolution with range  $L^1(G)$, extending the natural involution on $L^1(G)$.
\end{theorem}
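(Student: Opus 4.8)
The plan is to prove the two assertions of Theorem~\ref{co:150911B} separately, in both cases exploiting the decomposition \eqref{eq:010412A}, namely $L^\infty(G)^*=L_0^\infty(G)^\circ\oplus L_0^\infty(G)^*$, together with the description of $L_0^\infty(G)^*$ as the closed span of functionals with compact carriers, and the Lau--Pym structure results in Theorem~\ref{th:140911A}. For the ``only if'' direction of the first statement, suppose $L_0^\infty(G)^*$ has an involution. If $G$ is non-discrete, then by Theorem~\ref{th:140911A}(i) each $E\in\mathscr E_1(G)$ lies in $L_0^\infty(G)^*$, and (arguing as in the proof of Theorem~\ref{th:150911A} via Theorem~\ref{ex:120911B}(i)) any such $E$ is the identity of $L_0^\infty(G)^*$; an involution sends the identity to itself, so $L_0^\infty(G)^*$ would have a \emph{unique} right identity of norm one and that identity would lie in the topological centre, hence in $L^1(G)$ by Budak--I\c sik--Pym \cite[Proposition 5.4]{BIP11}, contradicting non-discreteness. (Actually the cleaner route: non-discreteness forces $\mathscr E_1(G)$ to be large while the existence of an identity forces it to be a singleton — either phrasing works.) Conversely, if $G$ is discrete then $L^1(G)=\ell^1(G)$ carries its natural involution $f\mapsto f^*$, $f^*(x)=\overline{f(x^{-1})}$, and since $G$ is discrete $L^1(G)$ is a norm-closed two-sided ideal with a bounded approximate identity in $L^\infty(G)^*=\ell^\infty(G)^*$; moreover when $G$ is discrete $L_0^\infty(G)=c_0(G)$ and one checks directly that $L_0^\infty(G)^*$ \emph{is} (isometrically isomorphic to) $\ell^1(G)$ — indeed functionals with compact (=finite) carrier are exactly the finitely supported elements of $\ell^1(G)$, whose closure is all of $\ell^1(G)$. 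So in the discrete case $L_0^\infty(G)^*=L^1(G)$ carries the honest involution, giving the ``if'' direction.

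For the second assertion, assume $G$ is discrete. The natural candidate for the trivolution on $L^\infty(G)^*$ is obtained from Theorem~\ref{th:070911B}(iii): take $B=L^1(G)$ with its natural involution $\rho$, take $I=L_0^\infty(G)^\circ$ (which by the Lau--Pym decomposition \eqref{eq:010412A}, specialized to $G$ discrete where $L_0^\infty(G)^*=L^1(G)$, is precisely a closed two-sided ideal complementing $L^1(G)$), and let $p\colon L^\infty(G)^*\to L^1(G)$ be the associated projection. One must verify that $p$ is an \emph{algebra homomorphism}: this is exactly the content of $L^1(G)$ being a right ideal (even an ideal) in its second dual for $G$ discrete together with the fact that the complement $L_0^\infty(G)^\circ$ is a two-sided ideal — both facts are available from Lau--Pym and the general theory, and one computes $p(EF)=p(E)p(F)$ by writing $E=E_0+a$, $F=F_0+b$ with $E_0,F_0\in I$, $a,b\in L^1(G)$ and using that $I$ absorbs products on both sides. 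Then $\tau:=\rho\circ p$ is, by Theorem~\ref{th:070911B}, a trivolution on $L^\infty(G)^*$ with range $L^1(G)$; it visibly extends the natural involution on $L^1(G)$ because $p$ restricts to the identity on $L^1(G)$ and $\rho$ is that involution. Finally one notes $\|\tau\|=1$ since $p$ is a norm-one projection (the decomposition \eqref{eq:010412A} is an $\ell^1$-type decomposition, or at worst one renorms) and $\rho$ is isometric.

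The main obstacle I expect is pinning down precisely that, for $G$ discrete, $L_0^\infty(G)^*$ coincides with $L^1(G)$ inside $L^\infty(G)^{**}=\ell^\infty(G)^*$, and that the complementary summand $L_0^\infty(G)^\circ$ is a \emph{two-sided} ideal with the projection onto $L^1(G)$ multiplicative — the one-sided ideal property (right ideal) is standard for $A=L^1(G)$ with a b.a.i., but the other side, and the identification of $L_0^\infty(G)^*$ with $L^1(G)$ rather than with the larger $\ell^1(G)^{**}$-type object, needs the concrete description ``closure of compactly-carried functionals'' from Lau--Pym plus the observation that for $G$ discrete a compact set is a finite set. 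Once this bookkeeping is in place, everything else is a direct application of Theorem~\ref{th:070911B}(iii) and the elementary properties of the natural involution on $\ell^1(G)$, with no further analytic input required.
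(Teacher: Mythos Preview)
Your proposal is correct and follows essentially the same approach as the paper. The paper's proof is simply more compressed: for the ``only if'' direction it invokes Theorem~\ref{th:150911A} directly with $X=Y=L_0^\infty(G)^*$ (an involution being a surjective trivolution), rather than re-running the argument via Theorem~\ref{ex:120911B}(i) and the topological-centre identification; for the ``if'' direction and the second assertion it likewise appeals to $L_0^\infty(G)^*=C_0(G)^*=L^1(G)$ in the discrete case together with \eqref{eq:010412A} and Theorem~\ref{th:070911B}(iii), exactly as you do, without spelling out the verification that the projection is multiplicative (which follows because $L_0^\infty(G)$ is topologically introverted, so $L_0^\infty(G)^\circ$ is automatically a two-sided ideal and $L_0^\infty(G)^*\cong L^\infty(G)^*/L_0^\infty(G)^\circ$ as algebras).
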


\begin{proof}
If $G$ is discrete, then $L_0^\infty (G)^*=C_0(G)^*=L^1(G)$  has a natural involution, and hence  by \eqref{eq:010412A}
and by Theorem \ref{th:070911B}(iii),
$L^\infty (G)^*$ has a trivolution with range $L^1(G)$, extending the involution of $L^1(G)$.

If $G$ is not discrete, then the result follows from Theorem \ref{th:150911A} upon  taking $X=Y=L_0^\infty (G)^*$.
\end{proof}

\begin{remark}
In the above theorem we used the natural involution of $\ell^1(G)$ to obtain a trivolution on $\ell^\infty(G)^*$
with range  $\ell^1(G)$.  Alternatively, we can  view $\ell^\infty (G)^*$ as the Banach algebra  $M(\beta G)$,
where $\beta G$ is the Stone--$\check{\text{C}}$ech compactification of an infinite discrete group $G$,  recalling  that
$M(\beta G)=M(\beta G\setminus G)\oplus \ell^1(G)$ where $M(\beta G\setminus G)$ is an ideal in $M(\beta G)$
(Dales--Lau--Strauss \cite[Theorem 7.11 and  (7.4)]{DLS10}) and then use Theorem \ref{th:300112B}(iv).
\end{remark}

\begin{theorem}\label{th:150911C}
If $G$ is compact, then for each $E\in \mathcal E(G)$, there are trivolutions of $L^\infty(G)^*$ onto $EL^\infty (G)^*$.
\end{theorem}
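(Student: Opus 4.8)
The plan is to produce the desired map in the form $\tau=\rho\circ\ell_E$, where $\ell_E$ denotes left multiplication by $E$ on $L^\infty(G)^*$ and $\rho$ is an \emph{honest} involution on the subalgebra $EL^\infty(G)^*$, and then to appeal directly to Theorem~\ref{ex:120911B}(iii). The mechanism is that for compact $G$ the subalgebra $EL^\infty(G)^*$ can be identified, as a Banach algebra, with the measure algebra $M(G)$, which always carries an involution.

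First I would record the two structural facts that make this work. When $G$ is compact every continuous function on $G$ is left (and right) uniformly continuous, so $LUC(G)=C(G)$ and hence $LUC(G)^*=M(G)$; moreover the first Arens product on $LUC(G)^*$ restricts to convolution on $M(G)$, so this identification is one of Banach algebras. Secondly, $M(G)$ carries its standard isometric involution $\mu\mapsto\mu^\ast$ (given on Borel sets by $\mu^\ast(B)=\overline{\mu(B^{-1})}$), and this is the structure I shall transport back to $EL^\infty(G)^*$.

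Next, fix $E\in\mathscr E(G)$. By the Lau--Pym theorem (Theorem~\ref{th:140911A}(ii)) the restriction $\Psi:=\restrict{\pi}{EL^\infty(G)^*}$ is a continuous algebra isomorphism of $EL^\infty(G)^*$ onto $LUC(G)^*=M(G)$, which is moreover isometric when $E\in\mathscr E_1(G)$. Since an involution transports along an algebra isomorphism, $\rho:=\Psi^{-1}\circ(\,\cdot\,)^\ast\circ\Psi$ is a conjugate-linear anti-automorphism of $EL^\infty(G)^*$ with $\rho^2=\mathrm{id}$; that is, $\rho$ is an involution on $EL^\infty(G)^*$, in particular a trivolution of $EL^\infty(G)^*$ onto itself. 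Now $EL^\infty(G)^*$ is a subalgebra of $C:=L^\infty(G)^*$ of the form $EC$ with $E$ a right identity of $C$ (recall $E^2=E$ and $XE=X$ for all $X\in C$), so Theorem~\ref{ex:120911B}(iii) applies verbatim: $\tau:=\rho\circ\ell_E$ is a trivolution on $L^\infty(G)^*$ with $\tau\bigl(L^\infty(G)^*\bigr)=\rho\bigl(EL^\infty(G)^*\bigr)=EL^\infty(G)^*$. When $\|E\|=1$ one gets in addition $\|\tau\|=1$: $\Psi$ is then an isometry, so $\rho$ is isometric, and $\|\ell_E\|=1$ (since $\|E\|=1$ and $\ell_E(E)=E$), whence $\tau$ is a trivolution in the sense of Definition~\ref{090112A}; for a general $E\in\mathscr E(G)$ the same construction yields a trivolution in the purely algebraic sense, or a norm-one one once $EL^\infty(G)^*$ is equipped with the equivalent norm transported from $M(G)$.

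There is no deep obstacle here beyond bookkeeping: the only points needing care are that $EL^\infty(G)^*$ is genuinely a subalgebra of $L^\infty(G)^*$ on which $\ell_E$ restricts to the identity map (so that the hypotheses of Theorem~\ref{ex:120911B}(iii) are met), and that the first Arens product on $LUC(G)^*$ really does coincide with convolution on $M(G)$ for compact $G$ (so that $\Psi$ intertwines the two involutions as claimed). Both are standard.
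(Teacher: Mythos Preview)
Your proof is correct and follows essentially the same route as the paper's: transport an involution from $M(G)\cong LUC(G)^*$ back to $EL^\infty(G)^*$ via the Lau--Pym isomorphism $\restrict{\pi}{EL^\infty(G)^*}$, then apply Theorem~\ref{ex:120911B}(iii) with $\ell_E$. Your additional remarks on the norm when $E\notin\mathscr E_1(G)$ are a welcome bit of bookkeeping that the paper glosses over.
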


\begin{proof}
Let $E\in \mathscr E(G)$. The compactness of $G$ implies that $L_0^\infty (G)=L^\infty (G)$ and
$LUC(G)^*=M(G)$. Let $\rho$ be any involution on $LUC(G)^*$ and let $\pi '=\restrict{\pi}{EL^\infty (G)^*}$.
It follows from Theorem \ref{th:140911A}(iii) that $\rho ':=(\pi ')^{-1}\circ \rho \circ \pi '$ is an involution on $EL^\infty (G)^*$.
Let $\ell_E \colon L^\infty (G)^* \lra EL^\infty (G)^*$ be the left multiplication by $E$.
 Then by Theorem \ref{ex:120911B}(iii), $\tau :=\rho '\circ \ell_E$ is a trivolution of $L^\infty (G)^*$ onto $EL^\infty (G)^*$,
 as required.
\end{proof}

\begin{theorem}\label{th:150911D}
Let $G$ be a locally compact group. For each $E\in \mathscr E_1(G)$, there exists a trivolution
of $L_0^\infty (G)^*$ onto $EL_0^\infty (G)^*$.
\end{theorem}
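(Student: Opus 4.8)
The plan is to mirror the argument for the compact case (Theorem~\ref{th:150911C}), but to invoke part~(iii) of the Lau--Pym Theorem~\ref{th:140911A} in place of part~(ii): part~(iii) identifies $EL_0^\infty(G)^*$ isometrically with the measure algebra $M(G)$ for \emph{every} locally compact group $G$, so that no compactness is needed. Concretely, I would transport the natural involution of $M(G)$ back to $EL_0^\infty(G)^*$ along this identification, and then compose with left multiplication by $E$ in order to spread it out to a trivolution defined on all of $L_0^\infty(G)^*$.

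First I would set up the framework. By Theorem~\ref{th:140911A}(i), $E\in\mathscr E_1(G)\subset L_0^\infty(G)^*$. Since $E$ is a right identity of $L^\infty(G)^*$ and $L_0^\infty(G)^*$ is a subalgebra of $L^\infty(G)^*$, it follows that $FE=F$ for every $F\in L_0^\infty(G)^*$; moreover Theorem~\ref{th:140911A}(iii) gives $EL_0^\infty(G)^*\subset L_0^\infty(G)^*$. Hence, writing $C:=L_0^\infty(G)^*$, the element $E$ is a right identity of $C$, and $A:=EL_0^\infty(G)^*=EC$ is a (closed) subalgebra of $C$ of the form $eC$ with $e=E$. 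This is exactly the hypothesis required by Theorem~\ref{ex:120911B}(iii).

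Next I would produce a trivolution on $A$. By Theorem~\ref{th:140911A}(iii), the restriction $\pi':=\restrict{\pi}{EL_0^\infty(G)^*}$ is an isometric algebra isomorphism of $A$ onto $M(G)$. Let $J$ denote the natural (isometric) involution $\mu\mapsto\mu^*$ of the measure algebra $M(G)$. Then $\rho':=(\pi')^{-1}\circ J\circ\pi'$ is an involution on $A$ with $\|\rho'\|=1$; in particular it is a trivolution on $A$, and $\rho'(A)=A$. Setting $\tau:=\rho'\circ\ell_E$, where $\ell_E\colon C\lra A$ is left multiplication by $E$, Theorem~\ref{ex:120911B}(iii) shows that $\tau$ is a trivolution on $C=L_0^\infty(G)^*$ with $\tau(C)=\rho'(A)=A=EL_0^\infty(G)^*$, which is the asserted trivolution.

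The one point requiring care is the normalization $\|\tau\|=1$ demanded by Definition~\ref{090112A}. On the one hand $\|\tau\|\le\|\rho'\|\,\|\ell_E\|\le\|E\|=1$. On the other hand $EE=E$, so every $F\in A$ satisfies $\ell_E(F)=F$, whence $\restrict{\tau}{A}=\rho'$ is isometric and $\|\tau\|\ge 1$; thus $\|\tau\|=1$. I do not expect any serious obstacle: all the substance lies in the Lau--Pym identification $EL_0^\infty(G)^*\cong M(G)$ of Theorem~\ref{th:140911A}(iii) together with Theorem~\ref{ex:120911B}(iii), and what remains is a routine verification that their hypotheses hold in the normed category.
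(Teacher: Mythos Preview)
Your proof is correct and follows essentially the same route as the paper: transport an involution on $M(G)$ back to $EL_0^\infty(G)^*$ via the Lau--Pym isometric isomorphism of Theorem~\ref{th:140911A}(iii), and then apply Theorem~\ref{ex:120911B}(iii) with $C=L_0^\infty(G)^*$ and $e=E$ to obtain the trivolution $\tau=\rho'\circ\ell_E$. Your write-up is in fact more careful than the paper's in that you explicitly verify that $E$ is a right identity of $L_0^\infty(G)^*$ (so that Theorem~\ref{ex:120911B}(iii) genuinely applies) and that $\|\tau\|=1$.
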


\begin{proof}
By Theorem \ref{th:140911A}, $\mathscr E_1(G)\subset L_0^\infty(G)^*$, and for each $E\in \mathscr E_1(G)$,
$EL_0^\infty (G)^* \cong M(G)$. If $\rho$ is an involution on $M(G)$, then it is easily checked that
$\rho ':=(\restrict{\pi}{EL_0^\infty (G)^*})^{-1}\circ \rho \circ (\restrict{\pi}{EL_0^\infty (G)^*})$ is an
involution on $EL_0^\infty (G)^*$, and  hence by Theorem \ref{ex:120911B}(iii),
$\tau :=\rho '\circ \ell_E$ is a trivolution of $L_0^\infty (G)^*$ onto $EL_0^\infty (G)^*$.
\end{proof}

\begin{remark}\label{re:160911A}
 If $\rho$ in the proofs of Theorems \ref{th:150911C} or \ref{th:150911D}  restricts to an involution  $\rho_0$ on $L^1(G)$, then  in view
of the fact that $\pi$ is the identity on $L^1(G)$, the trivolution $\tau$  constructed in the respective proofs
will be  an extension of $\rho_0$.
\end{remark}


The  authors would like to thank Kenneth A. Ross for his comments and useful discussions on the topic.
They also thank  F.\ Ghahramani for his comments. 
The second author was partially supported by NSERC.
 The third author thanks Indian National Science Academy for support under the INSA Senior Scientist Programme and Indian Statistical Institute, New Delhi, for a Visiting Professorship under this programme together with excellent research facilities.

\providecommand{\bysame}{\leavevmode\hbox to3em{\hrulefill}\thinspace}
\providecommand{\MR}{\relax\ifhmode\unskip\space\fi MR }
\providecommand{\MRhref}[2]{%
  \href{http://www.ams.org/mathscinet-getitem?mr=#1}{#2}
}
\providecommand{\href}[2]{#2}

\end{document}